\theoremstyle{definition}
\newtheorem{definition}{Definition}
\newtheorem{theorem}{Theorem}
\newtheorem{proposition}{Proposition}
\newtheorem{lemma}{Lemma}
\newtheorem{remark}{Remark}
\newtheorem{example}{Example}
\newcommand{\fl}{\rightarrow}
\def\Ab{\mbox{\bf Ab}}
\def\C{\mbox{$\cal C$}}
\def\ie{{\emph{i.e.~ }}}
\newcommand\Te{\mbox{$\cal T$}}
\newcommand\I{\overrightarrow{I}}
\newcommand\dipi[1]{\overrightarrow{\pi}_1(#1)}
\newcommand\Cr{\mathcal{C}}
\definecolor{mred}{rgb}{0.7,0.1,0.1}
\definecolor{mblue}{rgb}{0,0,0.8}
\definecolor{mgreen}{rgb}{0,0.6,0.3}
\newcommand\Fact[1]{{\mathcal{F}}#1}
\newcommand\RFact[1]{{\mathcal{R}}#1}
\newcommand\LFact[1]{{\mathcal{L}}#1}
\newcommand\map[3]{#1: #2 \longrightarrow #3}
\newcommand\ab[0]{\textbf{Ab}}
\newcommand\gp[0]{\textbf{V}}
\newcommand\natsys[2]{NatSys(#1,#2)}
\newcommand\Real[0]{\mathbb{R}}
\newcommand\N[0]{\mathbb{N}}
\newcommand\D[0]{\mathcal{D}}
\newcommand\V[0]{\mathcal{V}}
\def\TM{\mbox{$\mathbb T$}}
\def\SM{\mbox{$\mathbb M$}}
\newcommand\functor[1][l]{\csname#1functor\endcsname}
\newcommand\lfunctor[3]{%
  \setbox0=\hbox{$#2$}%
  \kern\wd0%
  \ensurestackMath{\Centerstack[c]{#1\\ \mathllap{#2\;\,}\mathclap{\DownArrow}\\#3}}%
}
\newcommand\rfunctor[3]{%
  \setbox0=\hbox{$#2$}%
  \ensurestackMath{\Centerstack[c]{#1\\\mathclap{\DownArrow}\mathrlap{\,\;#2}\\#3}}%
  \kern\wd0%
}
\newcommand\DownArrow{\rotatebox[origin=c]{-90}{$\longrightarrow$\,}}
\begin{document}

\title{A semi-abelian approach to directed homology}
\author{Eric Goubault \\
{LIX}, {CNRS, Ecole Polytechnique,} \\{Institut Polytechnique de Paris}, \\ {{Palaiseau}, {91128 Cedex}, {France}}}



\maketitle

\begin{abstract}
We develop a homology theory for directed spaces, based on the semi-abelian category of (non-unital) associative algebras. The major ingredient is a simplicial algebra constructed from convolution algebras of certain trace categories of a directed space. We show that this directed homology HA is invariant under directed homeomorphisms, and is computable as a simple algebra quotient for $HA_1$. We also show that the algebra structure for $HA_n$, $n\geq 2$ is degenerate, through a Eckmann-Hilton argument. We hint at some relationships between this homology theory and natural homology, another homology theory designed for directed spaces. Finally we pave the way towards some interesting long exact sequences.
\end{abstract}

\section{Introduction}

The purpose of this article is to present directed homologies, closer to ordinary homology theories, using classical structures coming from associative algebra. The hope is to use well-established methods from non-abelian homology, such as the semi-abelian framework \cite{VanderLinden} we consider here, and which would be easier to manipulate and to compute than 
the ones developped in e.g. \cite{Dubut,cameron}, which relied on involved categorical structures (natural systems, with composition pairing). 
The long-term objective is to obtain practical, computable invariants for directed topology, that could even be implemented using existing tools from computer algebra. Another long-term objective is to relate directed homology to other theories, persistence, semi-abelian categories and representation theory of algebras.

The first intuition underlying this work is that we can make directed homological calculations in associative algebras, in place of modules. An associative algebra encodes one more operation than the addition and external multiplication operations, which are ways in ordinary algebraic topology, to represent formal sums of cells, and "count" cells and holes. This extra operation is internal multiplication, that models composition of cells in this work. Classical homology theories with coefficients in abelian groups equate concatenation with addition, whereas in directed topology, we believe it to be of primary importance to separate the two operations. 

Another intuition comes from \cite{Dubut,cameron}. 
There are direct relationships between what we are doing here with algebras and (bi)modules over algebras with natural systems (with compositional pairing) as developed in \cite{Dubut,cameron}. Informally, a natural system as in \cite{Dubut,cameron} is a generalized bimodule, with actions on the left and on the right of one dimensional paths, and composition pairing is a mirror of the algebra operation on the underlying paths. 


The homology theory we develop is quite simple and well-behaved, at the expense of having to apply this to constructions (glueing etc.) on directed spaces in a more contrived way than one would like them to be, because of functoriality issues. A way to cope with this would be to go from algebras to algebroids ("algebras with many objects"), this will be developed elsewhere since this requires a whole new machinery, which seems to go beyond the semi-abelian framework.

\paragraph{Contents}

In order to keep the article self-contained, we recap the notions we need, about directed spaces in Section 
\ref{sec:dspace}, about the semi-abelian category of associative algebras in Sections 
\ref{sec:backgroundalgebra} and 
\ref{sec:catalg}, and about the central construct of convolution algebras in Section 
\ref{sec:convalg}. 

The development of our directed homology theory starts at Section 
\ref{sec:simpalg} with the construction of a simplicial object in the category of (non-unital) associative algebras. Algebras in all dimensions $i \geq 1$ are built from $i$-traces, a form of directed hyper-surface of dimension $i$, together with a concatenation operation, which is non-commutative since it respects the underlying directed structure. Non concatenable traces have zero product. 

Using the semi-abelian framework \cite{VanderLinden}, we construct a homology theory $HA$ from the simplicial algebra construction of Section 
\ref{sec:diralg}. We derive its first elementary properties in Section 
\ref{sec:elemprop}. First we show that indeed, this homology theory is invariant under directed homeomorphisms, then we show that the underlying modules of $HA$ of a directed space are the same modules as the ones obtained from natural homology, for each pairs of initial and final points. The computation of $HA_1(X)$ is a simple one, that we fully spell out. Then applying the semi-abelian framework shows that the higher homology algebras of a directed space $X$, $HA_n(X)$, $n\geq 2$, are mere modules: the algebra operation collapses to the null multiplication\footnote{We are in the category of non-unital associative algebras, this does not imply that the algebra is the zero algebra.}. We finally hint at deeper relations with the approach to natural homology taken in \cite{cameron}, with the composition pairing  operation defining an internal multiplication of algebras. 

We illustrate some simple computations of directed homology algebras on the geometric realization of simple precubical sets in Section  
\ref{sec:examples}. In the restricted cases we exhibit, we can compute interesting sub-algebras of these directed homology algebras, which classify the traces up to homology between any pair of vertices of the precubical set. 

Finally, we discuss the existence of interesting long exact sequences in this directed homology theory in Section 
\ref{sec:exactseq}, applying again the semi-abelian framework. We exhibit a long exact sequence relating the homology algebras of a coproduct of two spaces with that of each of the two spaces, and discuss the potential generalizations to non-disjoint unions of spaces, paving the way for a general Mayer-Vietoris like long exact sequence. 

\section{Directed spaces}

\label{sec:dspace}

The context of a directed space space was introduced in \cite{grandisbook}.    

Let $I=[0,1]$ denote the unit
segment with the topology inherited from $\Real$. 

\begin{definition}[\cite{grandisbook}]
A directed topological space, or a d-space is a pair  $(X,dX)$ consisting of a topological space $X$ equipped with
a subset $dX\subset X^I$ of continuous paths $p:I \rightarrow X$, called directed paths or
d-paths, satisfying three axioms: 
\begin{itemize}
\item every constant map $I\rightarrow X$ is directed;
\item $dX$ is closed under composition with continuous non-decreasing maps  $I\to I$;
\item $dX$ is closed under concatenation.
\end{itemize}
\end{definition}

We shall abbreviate the notation $(X,dX)$ to $X$. 

Note that for a d-space $X$, the space of d-paths $dX\subset X^I$ is a topological space, it is equipped with the compact-open topology. 

A map $f: X\to Y$ between d-spaces $(X, dX)$ and $(Y, dY)$ is said to be {\it a d-map} if it is continuous and for any d-path $p\in dX$ 
the composition $f\circ p:I\to Y$ belongs to $dY$. In other words we require that $f$ preserves d-paths. We write $df~: dX \rightarrow dY$ for the induced map between directed paths
spaces. We denote by ${\cal D}$ the category of d-spaces. 

An isomorphism (also called dihomeomorphism) between directed spaces is a homeomorphism which is a directed map, and whose inverse is also a directed map. One of the ultimate goals of directed topology is to classify directed spaces up to dihomeomorphisms. 

Classical examples of d-spaces arise as geometric realizations of precubical sets, as found in e.g. the semantics of concurrent and distributed systems \cite{thebook}. We will use some of these classical example to explain the directed homology theory we are designing in this article, hence we need to recap some of the basic definition on precubical sets, as well as their relation to d-spaces. 


\begin{definition}
A precubical set $C$ is a sequence of disjoint sets $(C_n)_{n \geq 0}$ with a collection of face maps 
$$d^{\epsilon}_i : \ C_n \rightarrow C_{n-1}$$ 
for $n > 0$, $\epsilon \in \{0,1\}$, $i \in \{1,\ldots,n\}$ such that $d^\epsilon_i d^\eta_j = d^\eta_{j-1}d^\epsilon_i$ for all $\epsilon, \eta \in \{0, 1\}$ and $i < j$.
\end{definition}

Precubical sets are easily seen to form a presheaf category on a site $\Box$, whose objects are $[n]$, $n\in \N$ and whose morphisms are $\delta^\epsilon_i: \ [n-1] \rightarrow [n]$ for $n >0$, $\epsilon \in \{0,1\}$, $i \in \{1,\ldots,n\}$, such that $\delta^\eta_j \delta^\epsilon_i  = \delta^\epsilon_i \delta^\eta_{j-1}$, for 
$i < j$. 

Let $\I=[0,1]$ with as directed paths all increasing paths in the usual ordering inherited from real numbers, and $\I^n$ the $n$th power of $\I$ (for $n > 0$) as a directed space: as ${\cal D}$ is complete (as well as co-complete), this is well defined. $\I^n$ has as directed paths all continuous paths that are increasing on each coordinate. 

\begin{definition}
The geometric realization $\mid C\mid $ of a precubical set $C$ is the Yoneda extension of the following functor $F: \ \Box \rightarrow {\cal D}$: 
\begin{itemize}
    \item $F([n])=\I^n$
    \item $F(\delta^\epsilon_i)$ is the map from $\I^{n-1}$ to $\I^n$ which sends $(x_1,\ldots,x_{n-1})\in \I^{n-1}$ to $(x_1,\ldots,x_{i},\epsilon,x_{i+1},\ldots,x_{n-1})$. 
\end{itemize}
\end{definition}

For more details, see e.g. 
\cite{Ziemanski2}.


\section{Associative algebras}



\label{sec:backgroundalgebra}
Let $R$ be a commutative ring.  

\begin{definition}
An (non-unital) associative algebra on $R$, or $R$-algebra $A=(A,+,.,\times)$ is a $R$-module $(A,+,.)$, with external multiplication by elements of the ring $R$ denoted by ., that has an internal semigroup operation, which is an associative operation ("multiplication" or "internal multiplication") $\times: A \times A \rightarrow A$ that is bilinear. We denote by $0$ the neutral elements for $+$ (which we use also for denoting the 0 of the ring $R$). 
\end{definition}


\begin{definition}
Let $A$ and $B$ be two $R$-algebras. A morphism $f: A \rightarrow B$ of $R$-algebras is a linear map from $A$ to $B$ seen as $R$-modules, such that it commutes with the internal multiplication: 
$$f(a\times a')=f(a)\times f(a')$$
\end{definition}

We call $Alg$ the category of $R$-algebras .

A $R$-submodule $B$ of a $R$-algebra $A$ is a $R$-subalgebra of $A$ if $b_1\times b_2 \in B$ for all $b_1, b_2 \in B$. 
A $R$-submodule $I$ of a $R$-algebra $A$ is a right ideal of $A$ (resp. left ideal of $A$) if $x\times a \in I$ (or $a\times x \in I$, respectively) for all $x \in I$ and $a \in A$. A two-sided ideal of $A$ (or simply an ideal of $A$) is a both a left ideal and a right ideal of $A$.

If $I$ is a two-sided ideal of a $R$-algebra $A$, then the quotient $R$-module $A/I$ has a unique $R$-algebra structure, which is the quotient of the algebra $A$ by $I$, such that the canonical surjective linear map $\pi : A \rightarrow A/I$, $a \rightarrow a = a + I$, becomes a $R$-algebra homomorphism.

Some particular elements will play an important role in our constructions, that are also central in representation theory \cite{assocalg}: 
let $A$ be a $R$-algebra, an element $e \in A$ is called an idempotent if $e^2 = e$.

The idempotent $e$ is said to be central if $a\times e = e\times a$ for all $a \in A$. The idempotents $e_1, e_2 \in A$ are called orthogonal if $e_1 \times e_2 = e_2\times e_1 = 0$. The idempotent $e$ is said to be primitive if $e$ cannot be written as a sum $e = e_1 + e_2$, where $e_1$ and $e_2$ are nonzero orthogonal idempotents of $A$.



\section{Categorical and homological properties of algebras}

\label{sec:catalg}

Since associative algebras are models of an algebraic theory, the category of associative algebras is complete and cocomplete by classical results from the theory of Lawvere theories, see e.g. \cite{lawveretheories}. 


The category of associative algebras is not Abelian. Although $Alg$ has a zero object (the associative algebra comprising only 0), and has coproducts and products, these do not coincide, therefore $Alg$ is not even additive. 

Indeed, the coproduct of $A_1 \coprod A_2$ of two $R$-algebras $(A_1,+_1,._1,\times_1)$ and $(A_2,+_2,._2,\times_2)$ is $A=(A,+, ., \times)$ where $A$ is the coproduct, as $R$-modules, of all $$A_{\epsilon_1}\otimes A_{\epsilon_2} \otimes \ldots \otimes A_{\epsilon_n}$$ \noindent where $n\geq 0$, $\epsilon_i\in \{1,2\}$ for all $i=1,\ldots, n$ and $\epsilon_j \neq \epsilon_{j+1}$ for all $j=1,\ldots,n-1$, with the following internal multiplication. If $(a_1,\ldots,a_n)\in A_{\epsilon_1}\otimes A_{\epsilon_2} \otimes \ldots \otimes A_{\epsilon_n}$ and $(b_1,\ldots,b_m)\in A_{\delta_1}\otimes A_{\delta_2} \otimes \ldots \otimes A_{\delta_n}$ then: 
    $$(a_1 \otimes \ldots \otimes a_n)\times (b_1 \otimes \ldots \otimes b_m)=\left\{\begin{array}{ll}
    a_1\otimes \ldots \otimes a_n \otimes b_1 \otimes \ldots \otimes b_m & \mbox{if $\epsilon_n\neq \delta_1$} \\
    a_1\otimes \ldots \otimes (a_n \times b_1) \otimes \ldots \otimes b_m & \mbox{otherwise} 
    \end{array}\right.$$
    
    The canonical embeddings are the maps $in_1: \ A_1 \rightarrow A_1 \coprod A_2$ and $in_2: \ A_2 \rightarrow A_1 \coprod A_2$ which are the obvious identity maps, embedding $A_1$ and $A_2$ into the coproduct of $A_1$ and $A_2$ as $R$-algebras. 
    
    Whereas 
    the product $A_1 \times A_2$ of $(A_1,+_1,._1,\times_1)$ and $(A_2,+_2,._2,\times_2)$, two $R$-algebras, is $A=(A,+, ., \times)$ where $A$ has as underlying $R$-module the product of the $R$-module $A_1$ with the $R$-module $A_2$, that is, elements of which are $(a_1,a_2)$, with $a_1\in A_1$ and $a_2 \in A_2$, and with algebra multiplication: 
    $$
    (a_1,a_2)\times (a'_1,a'_2)=(a_1\times_1 a'_1,a_2\times_2 a'_2)
    $$
    
    Even though $Alg$ is not Abelian, it is possible to do homological algebra within $Alg$ since it is a semi-Abelian category \cite{VanderLinden}. 
This is due to the fact again that $Alg$ forms an algebraic variety in the sense of Lawvere, and in order for algebraic varieties to be semi-abelian, it is enough that they have a zero object and that they are protomodular in the sense of Bourn. By algebraic reasoning, an algebraic variety is protomodular in particular when the algebraic variety includes a group operation \cite{Borceux}, which is obviously the case for $Alg$.

In what follows, we will need to use classical kernel and co-kernel constructions to define a directed homology in the semi-Abelian category $Alg$, that we are now describing. 

     The equalizer of pair of morphisms in $Alg$, $f, \ g: \ A \rightarrow B$ is 
    $$Ker(f,g)=\{a \in A \mid f(a)=g(a) \}
    $$
    \noindent with the inclusion morphism $ker(f,g): \ Ker(f,g) \rightarrow A$. We write $Ker \ f$ for $Ker(f,0)$ and $ker \ f: \ Ker \ f \rightarrow A$ is the corresponding inclusion morphism. $Ker(f)$ is indeed a two-sided ideal of $A$ whereas $Ker(f,g)$ may not be. 
    
    We call $Ker(f)$, $Ker(f,0)$, the kernel of $f$.

Let again $f, \ g: \ A \rightarrow B$ where $A$ and $B$ are $R$-algebras. Then,
$$
coKer(f,g)=B/I(f,g)
$$
\noindent  where $I(f,g)$ is the two-sided ideal of $B$ generated by $Im \ (f-g)$, and $coker(f,g): \ B\rightarrow coKer(f,g)$ is the corresponding epimorphism (i.e. a surjective morphism) of $R$-algebras. 

We write $coKer \ f$ for $coKer(f,0)$ and $coker \ f: \ B \rightarrow coKer \ f$.
    
Now, if $f$ is proper in the sense that its image is normal, i.e. its image is a two-sided ideal in $B$, $coKer(f)=B/Im(f)$. 





\section{Convolution algebras}
\label{sec:convalg}

\begin{definition}
\label{def:categoryalgebra}
The category algebra or convolution algebra 
$R[\C]$ of $\C$ over $R$ is the $R$-algebra
whose underlying $R-module$ is the free module 
$R[\C_1]$ over the set of morphisms of $\C$ and 
whose product operation is defined on basis-elements 
$f$, $g\in \C_1 \subseteq R[\C]$
to be their composition if they are composable and zero otherwise:
$$f\times g=
\left\{\begin{array}{ll}
g\circ f & \mbox{if composable} \\
0 & \mbox{otherwise} 
\end{array}
\right.$$
\end{definition}

When $\C$ is a poset, $R[\C]$ is the incidence algebra of the poset. When $\C$ is a quiver, $R[\C]$ is known as the path algebra of $\C$. 
When $\C$ is a category, $R[\C]$ is a quotient of the path algebra of its underlying quiver by the ideal generated by relations $f\times g-g\circ f=0$. Note that when $\C$ is a groupoid, $R[\C]$ is a star-algebra (an algebra equipped with an anti-involution).  

\begin{remark}
\label{rem:decomppath}
In all cases, path algebras $R[Q]$ are such that they are the coproduct, as $R$-modules, of the $R[Q](a,b)=e_aR[Q]e_b$ for all $a$, $b\in Q_0$. 
\end{remark}

We give below a number of classical examples of quiver and poset algebras. 

\begin{example}[directed $S^1$: $dS^1$]
\label{ex:dirS1}
Consider the simple loop graph: 
\[
\begin{tikzcd}
u \arrow[out=0,in=90,loop]
\end{tikzcd}
\]
Its path algebra is easily seen to be the algebra of polynomials in one indeterminate $R[t]$. Indeed, the basis of the $R$-module of dipaths is in bijection with $\{1, t, t^2, \ldots\}$ (where $t^i$ denotes the unique path of length $i$), and the algebra multiplication adds up lengths of dipaths. 
\end{example}

\begin{example}[see e.g. \cite{assocalg})]
\label{ex:kronecker}
Consider another graph version of a directed circle: 
\[
\begin{tikzcd}
1 \arrow[r,bend left,"\alpha"] \arrow[r,bend right,swap,"\beta"] & 2
\end{tikzcd}
\]

In that case, the corresponding path algebra is 
the following matrix algebra: 
$$
\begin{pmatrix}
R & 0 \\
R^2 & R
\end{pmatrix}
$$
\noindent known as the Kronecker algebra. Elements of this Kronecker algebra are of the form: 
$$
\begin{pmatrix}
a & 0 \\
(b,c) & d
\end{pmatrix}
$$
\noindent with the "obvious" addition and external multiplication, and as internal (algebra) multiplication, the "obvious one" as well: 
$$
\begin{pmatrix}
a & 0 \\
(b,c) & d
\end{pmatrix}
\times
\begin{pmatrix}
a' & 0 \\
(b',c') & d'
\end{pmatrix}
=
\begin{pmatrix}
a'a & 0 \\
(a'b+b'd,a'c+c'd) & d'd
\end{pmatrix}
$$

\end{example}

\begin{example}[Empty square]
\label{ex:emptysquare}
We apply Lemma 1.12 of \cite{assocalg} to the following graph: 
\[\begin{tikzcd}
  4 \arrow[r] \arrow[d]
    & 2 \arrow[d] \\
  3 \arrow[r]
& 1 \end{tikzcd}
\]
We get the following path algebra: 
$$
\begin{pmatrix}
R & 0 & 0 & 0 \\
R & R & 0 & 0 \\
R & 0 & R & 0 \\
R^2 & R & R & R 
\end{pmatrix}
$$
Entries $(i,j)$ are $R^k$ where $k$ is the number of paths from vertex $i$ to vertex $j$. 
\end{example}


\begin{remark}
\label{rem:notfunctorial}
Finally, note that the category algebra is not a functorial construction. This is because there is no reason, given a functor $F$ from $\C$ to $\D$, its linearization (the obvious linear map induced by $F$ on the underlying $R$-module of $R[\C]$) preserves internal multiplication. 

To see this, consider the two object category $2$, objects are called $\alpha$ and $\beta$, with only identities as morphisms, and the one object category $1$, with only object $\gamma$, and only morphism being the identity on $\gamma$. Consider now the functor $F: \ 2 \rightarrow 1$ which is the unique possible map from $2$ to $1$. The algebra $R[2]$ is $R^2$ as a $R$-module, with multiplication $(k,l) \times (k',l')=(kk',ll')$. The algebra $R[1]$ is, as a $R$-module, $R$, with algebra multiplication $m \times m'=mm'$. Now, the linearization of $F$ maps $(k,l)$ to $k+l$, but $F((k,l)\times (k',l'))=kk'+ll'$ which is not in general equal to $F(k,l)\times F(k',l')=(k+l)(k'+l')$.

In fact, only injective-on-objects functors are mapped naturally onto algebra morphisms. Indeed, suppose $F$ is a functor from $\C$ to $\D$, that is injective on objects, and consider its linearization, that we still denote by ${F}$, from the free $R$-module enriched category $\C$ to the free $R$-module enriched category $\D$. We argue that ${F}$ induces an algebra map from $R[\C]$ to $R[\D]$. We compute: 
$$\begin{array}{rcl}
F\left(\left(\sum\limits_{i=1}^n r_i f_i\right)\times \left(\sum\limits_{j=1}^m s_i g_j\right)\right) & = & F\left(\sum\limits_{i=1,\ldots,n; j=1,\ldots,m} 
r_i s_j (g_j \circ f_i)\right)
\end{array}$$
\noindent where $f_i$ and $g_j$ are morphisms of $\C$ and $r_i$ and $s_j$ are elements of $R$. Let $S$ be the subset of $\{1,\ldots,n\}\times \{1,\ldots,m\}$ of composable pairs $(f_i,g_j)$, i.e. $(i,j)\in S$ if and only if $g_j$ composes with $f_i$. We can now write: 
$$\begin{array}{rcl}
F\left(\left(\sum\limits_{i=1}^n r_i f_i\right)\times \left(\sum\limits_{j=1}^m s_i g_j\right)\right) & = & F\left(\sum\limits_{(i,j)\in S} 
r_i s_j (g_j \circ f_i)\right) \\
 & = & \sum\limits_{(i,j)\in S} r_i s_j (F(g_j) \circ F(f_i))
\end{array}$$
Now, as $F$ is injective on objects, $F(g_j)$ composes with $F(f_i)$ if and only if $g_j$ composes with $f_i$, that is, if and only if $(i,j) \in S$. Therefore: 
$$\begin{array}{rcl}
F\left(\left(\sum\limits_{i=1}^n r_i f_i\right)\times \left(\sum\limits_{j=1}^m s_i g_j\right)\right) & = & \sum\limits_{(i,j)\in S} r_i s_j (F(g_j) \circ F(f_i)) \\
& = & \left(\sum\limits_{i=1}^n r_i F(f_i)\right)\times \left(\sum\limits_{j=1}^m s_i F(g_j)\right)\\
& = & F\left(\sum\limits_{i=1}^n r_i f_i\right)\times F\left(\sum\limits_{j=1}^m s_i g_j\right)
\end{array}$$
\end{remark}

\section{The simplicial algebra of a directed space}

\label{sec:simpalg}

Let $\I$ be the directed interval as in Section \ref{sec:dspace}, and $I$ be the directed space $[0,1]$ where all paths are directed. 

In next section, we are going to define a directed homology theory based on the construction of a simplicial object in the semi-abelian category $Alg$ we are introducing here. 





We recall the following: 
the standard simplex of dimension $n$ is $$
\Delta_n=\left\{(t_0,\ldots,t_n) \mid \forall i\in \{0,\ldots,n\}, \ t_i \geq 0 \mbox{ and } \sum\limits_{j=0}^n t_j=1\right\}
$$
For $n \in \N$, $n
\geq 1$ and $0 \leq k \leq n$, the $k$th ($n-1$)-face (inclusion) of the topological $n$-simplex is the subspace inclusion
$$\delta_k: \ \Delta_{n-1} \rightarrow \Delta_n$$
induced by the inclusion
$$(t_0,\ldots,t_{n-1}) \rightarrow (t_0,\ldots,t_{k-1},0,t_k,
\ldots, t_{n-1})$$
For $n \in \N$ and $0\leq k < n$, the 
$k$th degenerate 
$n$-simplex is the surjective map
$$
\sigma_k: \ \Delta_n \rightarrow \Delta_{n-1}$$
\noindent induced by the surjection: 
$$(t_0,\ldots,t_n)\rightarrow (t_0,\ldots,t_{k}+t_{k+1},\ldots,t_n)$$

Now we define traces of all dimensions in a directed space, and give them the structure of a simplicial set, then of a simplicial algebra. 

\begin{definition}
Let $X$ be a directed space, $i\geq 1$ an integer. 
We call $p$ an $i$-path, or path of dimension $i$ of $X$, any 
 directed map 
$$p: \I \times \Delta_{i-1} \rightarrow X$$ 
An $i$-trace from $p_s$ to $p_t$ is a class of $i$-path:
\begin{itemize}
\item modulo continuous increasing reparameterization in the first coordinate, 
\item with $p(0,t_0,\ldots,t_{i-1})$ not depending on $t_0,\ldots,t_{i-1}$ and equal to $p_s$ 
\item and $p(1,t_0,\ldots,t_{i-1})$ constant as well, equal to $p_t$
\end{itemize}
We write $T_i(X)$ for the set of $i$-traces in $X$. We write $T_i(X)(a,b)$, $a$, $b \in X$, for the subset of $T_i(X)$ made of $i$-traces from $a$ to $b$. 
\end{definition}

\begin{definition}
\label{def:boundaries}
Let $X$ be a directed space. We define:
\begin{itemize}
\item Maps $d_{j}$, $j=0,\ldots,i$ acting on $(i+1)$-paths 
$p: \I \times \Delta_i \rightarrow X$, $i\geq 1$: 
$$d_j(p)=p\circ (Id,\delta_j)$$
\item Maps $s_k$, $k=0,\ldots,i-1$ acting on $i$-paths 
$p: \I \times \Delta_{i-1} \rightarrow X$, $i\geq 1$: 
$$s_k(p)=p\circ (Id,\sigma_k)$$
\end{itemize}
\end{definition}

\begin{lemma}
\label{lem:boundariesstartend}
Maps $d_j$ defined in Definition \ref{def:boundaries} induce maps from $T_{i+1}(X)$ to $T_i(X)$. Similarly, maps $s_j$ induce maps from $T_i(X)$ to $T_{i+1}(X)$. 

Furthermore, these maps restrict to maps from $T_{i+1}(X)(a,b)$ to $T_i(X)(a,b)$ for any $a$, $b \in X$ (respectively, from $T_i(X)(a,b)$ to $T_{i+1}(X)(a,b)$). 
\end{lemma}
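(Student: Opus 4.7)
The plan is to verify, for both families $d_j$ and $s_k$, that they are well-defined on equivalence classes (so that they descend to maps on $T_*(X)$) and that they preserve the source/target endpoints.

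First I would check that if $p : \I \times \Delta_i \to X$ is an $(i+1)$-path (i.e.\ a d-map), then $d_j(p) = p \circ (Id,\delta_j) : \I \times \Delta_{i-1} \to X$ is again a d-map, and likewise $s_k(p) = p \circ (Id,\sigma_k) : \I \times \Delta_i \to X$ is a d-map when $p$ is an $i$-path. Since $Id : \I \to \I$ is a d-map and both $\delta_j$ and $\sigma_k$ are continuous maps between simplices (viewed with the directed structure in which every continuous path is directed), the product maps $(Id,\delta_j)$ and $(Id,\sigma_k)$ are d-maps between the relevant directed product spaces, and composition with the d-map $p$ again yields a d-map by the defining axiom of $\D$.

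Next I would show that $d_j$ and $s_k$ respect the trace equivalence. Suppose $p \sim p'$ as $(i+1)$-paths via an increasing continuous reparameterization $\phi : \I \to \I$, so that $p(s,\tau) = p'(\phi(s),\tau)$ for all $(s,\tau) \in \I \times \Delta_i$. Then a direct calculation
\begin{equation*}
d_j(p)(s,\tau') = p(s,\delta_j(\tau')) = p'(\phi(s),\delta_j(\tau')) = d_j(p')(\phi(s),\tau')
\end{equation*}
exhibits the same reparameterization $\phi$ witnessing $d_j(p) \sim d_j(p')$. The analogous computation with $\sigma_k$ in place of $\delta_j$ handles $s_k$. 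For the boundary-constancy conditions, if $p(0,\tau)$ is constant equal to $p_s$ in $\tau$, then $d_j(p)(0,\tau') = p(0,\delta_j(\tau')) = p_s$ for every $\tau'$, and similarly $d_j(p)(1,\tau') = p_t$; the same trivially holds with $\sigma_k$. This simultaneously gives the factorization through $T_i(X)$ (resp.\ $T_{i+1}(X)$) and the refinement that $d_j$ sends $T_{i+1}(X)(a,b)$ into $T_i(X)(a,b)$ and $s_k$ sends $T_i(X)(a,b)$ into $T_{i+1}(X)(a,b)$.

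The main potential obstacle is the first step: one needs to be precise about the directed structure placed on $\I \times \Delta_{i-1}$ so that d-maps out of it correspond to the intended notion of an $i$-path. Granting the natural convention that $\Delta_{i-1}$ carries its maximal d-structure (every continuous path is directed) and that products are taken in $\D$, the directedness of $(Id,\delta_j)$ and $(Id,\sigma_k)$ is automatic; the rest of the argument is then just the elementary bookkeeping above.
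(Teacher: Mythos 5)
Your proof is correct and follows essentially the same route as the paper's: check compatibility with reparameterization in the first coordinate and then verify the endpoint-constancy conditions, which simultaneously yields the restriction to $T_{i+1}(X)(a,b)$ and $T_i(X)(a,b)$. The only addition is your explicit verification that $p\circ(Id,\delta_j)$ and $p\circ(Id,\sigma_k)$ remain d-maps, a point the paper leaves implicit; this is a harmless (indeed welcome) extra check under the natural convention on the directed structure of $\I\times\Delta_{i-1}$.
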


\begin{proof}
Maps $d_j$ and $s_k$ of Definition \ref{def:boundaries} are compatible with  reparametrization: two $(i+1)$-paths $p$ and $q$ equivalent modulo reparametrization (in its first parameter) give equivalent boundaries; similarly for degeneracy maps. 

The last point we have to check is that the image of the boundary operators (resp. degeneracy operators) give $i$-traces from $(i+1)$-traces (resp. $(i+1)$-traces from $i$-traces). It is due to the fact that, indeed, 
%
if $p$ is a $(i+1)$-trace from $p_s$ to $p_t$, $d_j(p)(0,s_0,\ldots,s_{i-1})=d_j(p)(0,d_j(s_0,\ldots,s_{i-1}))=p_s$, the other computations being similar. This also proves that boundary (resp. degeneracy) operators restrict to maps from $T_{i+1}(X)(p_s,p_t)$ to $T_i(X)(p_s,p_t)$ (resp. from $T_i(X)(p_s,p_t)$ to $T_{i+1}(p_s,p_t)$). 
\end{proof}

\begin{lemma}
\label{lem:singsimpset}
Let $X$ be a directed space. The boundary and degeneracy operators of Definition \ref{def:boundaries} give the sequence $ST(X)=(T_{i+1}(X))_{i\geq 0}$ the structure of a simplicial set. 

This restricts to a simplicial set $ST(X)(a,b)$ of traces from $a$ to $b$ in $X$, which is the singular simplicial set of the space of 1-traces (paths modulo continuous and increasing reparametrization) from $a$ to $b$, with the compact-open topology. 
\end{lemma}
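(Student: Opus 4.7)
The plan is to establish the three assertions in turn.

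First, to show that $ST(X)$ is a simplicial set, I would derive the simplicial identities for the operators $d_j$ and $s_k$ by pulling them back from the standard simplicial identities satisfied by the face inclusions $\delta_j$ and the degeneracy surjections $\sigma_k$ on the topological simplices $\Delta_n$. Since by Definition \ref{def:boundaries} we have $d_j(p) = p \circ (\mathrm{Id}_{\I}, \delta_j)$ and $s_k(p) = p \circ (\mathrm{Id}_{\I}, \sigma_k)$, each of the five simplicial identities (for $dd$, $ss$, and the three cases of $ds$) is obtained by precomposing with an identity $\delta_i \delta_j = \delta_{j-1}\delta_i$, $\sigma_i \sigma_j = \sigma_{j+1}\sigma_i$, etc., inside $(\mathrm{Id}_{\I},-)$. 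Lemma \ref{lem:boundariesstartend} already guarantees that these operators descend to traces and that they restrict correctly to $T_i(X)(a,b)$, so the sub-simplicial set structure on $ST(X)(a,b)$ is automatic.

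Second, to identify $ST(X)(a,b)$ with the singular simplicial set of the space $T_1(X)(a,b)$ of $1$-traces from $a$ to $b$, I would invoke the exponential adjunction for the compact-open topology. An $(i+1)$-path $p \colon \I \times \Delta_i \to X$ satisfying the endpoint conditions corresponds, by currying, to a continuous map $\hat{p} \colon \Delta_i \to X^{\I}$; the condition that $p$ is directed with $p(0,-) \equiv a$ and $p(1,-) \equiv b$ translates exactly to $\hat{p}$ factoring through the subspace $dX(a,b) \subset X^{\I}$ of $d$-paths from $a$ to $b$ (with the compact-open topology). Composing with the quotient $dX(a,b) \twoheadrightarrow T_1(X)(a,b)$ by increasing reparametrization gives a continuous map $\Delta_i \to T_1(X)(a,b)$, i.e.\ a singular $i$-simplex. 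This assignment is constant on reparametrization classes of $p$ (a reparametrization of the $\I$-coordinate of $p$ reparametrizes $\hat{p}(t)$ uniformly in $t$, hence is killed by the quotient), so it descends to a map from $T_{i+1}(X)(a,b)$ into the singular set of $T_1(X)(a,b)$. Compatibility with $d_j$ and $s_k$ is immediate since both sides act only on the $\Delta_i$-factor, via the very same $\delta_j$ and $\sigma_k$. For the inverse, a continuous map $f \colon \Delta_i \to T_1(X)(a,b)$ is, by the quotient/currying characterization, represented by a reparametrization class of map $\I \times \Delta_i \to X$ with the right endpoint behaviour, recovering an $(i+1)$-trace; the two constructions are mutually inverse.

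The main obstacle I anticipate is the careful bookkeeping of the reparametrization equivalence through the currying isomorphism: one must verify that ``reparametrize the $\I$-factor of $p$ by a continuous increasing $\varphi \colon \I \to \I$'' corresponds precisely to ``simultaneously reparametrize every $d$-path in the image of $\hat{p}$ by $\varphi$,'' and that the resulting map into the quotient $T_1(X)(a,b)$ is continuous with respect to the quotient topology induced from the compact-open topology on $dX(a,b)$. Granted these standard point-set checks, and granted that $\I$ and $\Delta_i$ are locally compact Hausdorff so that the exponential law applies cleanly, the rest of the proof is a straightforward unwinding of definitions.
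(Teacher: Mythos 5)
Your proposal is correct and follows essentially the same route as the paper's proof: the simplicial identities are pulled back from the cosimplicial identities satisfied by $\delta_j$ and $\sigma_k$, the restriction to $ST(X)(a,b)$ is obtained from Lemma \ref{lem:boundariesstartend}, and the identification with the singular simplicial set of the space of $1$-traces is by currying (the exponential adjunction for the compact-open topology). You simply spell out in more detail the point-set bookkeeping around reparametrization and the quotient topology that the paper leaves implicit.
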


\begin{proof}
The simplicial relations are direct consequences of the simplicial relations on $\delta_j$ and $\sigma_k$. The fact that $ST(X)(a,b)$ inherits the simplicial structure comes from Lemma \ref{lem:boundariesstartend}. The fact that this simplicial set is the singular simplicial set of the space of 1-traces from $a$ to $b$ is a direct application of curryfication. 
\end{proof}

We are now going to define higher trace categories from $T_i(X)$, for any directed space $X$, in order to recap the construction of natural homology \cite{Dubut}, and construct our new directed homology, in next section: 

\begin{definition}
\label{def:boundaryoperators}
Let $X$ be a directed space. We define a sequence of categories $\Te_i(X)$, $i=1,\ldots$ as follows: 
\begin{itemize}
    \item $\Te_i(X)$ has as objects, all points of $X$ 
    \item and as morphisms from $s$ to $t$, all $i$-traces from $s$ to $t$
     \item the composition $q\circ p$ is the concatenation $p*q$ of such $i$-traces, which is associative since this is taken modulo reparameterization: 
     $$
     p * q(s,t_0\ldots, t_i)=\left\{\begin{array}{ll}
     p(2s,t_0,\ldots,t_i) & \mbox{if $s\leq \frac{1}{2}$} \\
     q(2s-1,t_0,\ldots,t_i) & \mbox{if $\frac{1}{2} \leq s \leq 1$}
     \end{array}\right.
     $$
\end{itemize}
\end{definition}

Note that this allows us to reformulate the natural homology of directed spaces introduced in \cite{Dubut}: 

\begin{lemma}
\label{lem:naturalhomology}
The natural homology of \cite{Dubut} is the collection of functors $HN_i(X): {\cal F}\Te_1(X)\rightarrow Ab$, $i \geq 1$, from the factorization category of $\Te_1(X)$ to the category of abelian groups with: 
\begin{itemize}
\item $HN_i(X)(p)=H_i(ST(X)(a,b))$, where $p \in \Te_1(X)$ is a 1-trace from $a$ to $b \in X$, and $H_i$ is the standard homology of the unnormalized complex generated by the simplicial set $ST(X)(a,b)$,
\item $HN_i(X)(\langle u,v\rangle)(p)=[v \circ p \circ u]$ where $u$ is a 1-trace from $a'$ to $a$, $v$ a 1-trace from $b$ to $b'$, and $[x]$ denotes here the class in $H_i(X)(ST(X)(a',b')$ of $x$. 
\end{itemize}
\end{lemma}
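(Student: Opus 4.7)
The plan is to unfold the definition of natural homology from \cite{Dubut} and check, piece by piece, that it matches the description given in the lemma. Recall that the natural homology of a directed space $X$ is defined as a diagram over the factorization category $\Fact{\Te_1(X)}$ whose value on a $1$-trace $p: a \rightsquigarrow b$ is the singular homology of the space of directed paths from $a$ to $b$, taken modulo reparametrization and equipped with the compact--open topology. I would first reconcile this with the simplicial object $ST(X)(a,b)$ constructed in Lemma \ref{lem:singsimpset}: by curryfication, an $i$-path $\I \times \Delta_{i-1} \to X$ with fixed endpoints $a,b$ on the $\I$-coordinate corresponds to a continuous map $\Delta_{i-1} \to T_1(X)(a,b)$, where $T_1(X)(a,b)$ carries the quotient of the compact--open topology by reparametrization. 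Hence $ST(X)(a,b)$ is exactly the singular simplicial set of the trace space from $a$ to $b$, and its unnormalized chain complex computes the ordinary singular homology of that space. This establishes the first bullet on objects.

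Next I would establish functoriality over $\Fact{\Te_1(X)}$. A morphism in the factorization category from $p: a \rightsquigarrow b$ to $p': a' \rightsquigarrow b'$ is a pair $\langle u, v \rangle$ with $u \in \Te_1(X)(a',a)$ and $v \in \Te_1(X)(b,b')$ such that $p' = v \circ p \circ u$ (in the concatenation of Definition \ref{def:boundaryoperators}). Pre- and post-concatenation with $u$ and $v$ define continuous maps $T_1(X)(a,b) \to T_1(X)(a',b')$, and more generally induce, by post-composing paths on the $\I$-coordinate, simplicial maps $ST(X)(a,b) \to ST(X)(a',b')$. Applying $H_i$ then produces the stated group homomorphism $q \mapsto [v \circ q \circ u]$. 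The naturality and associativity of concatenation (up to reparametrization, which is killed in the passage to traces) turn this assignment into a functor.

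Two verifications then remain, both of a bookkeeping nature. First, I need to check well-definedness: the endpoints of $u$, $p$, $v$ agree, so the concatenations are legitimate traces; reparametrization classes compose, so the simplicial map does not depend on representatives; and passage to homology is well-defined. Second, I need to check that composition in $\Fact{\Te_1(X)}$ is respected, i.e.\ that $\langle u_2, v_2 \rangle \circ \langle u_1, v_1 \rangle = \langle u_1 \circ u_2, v_2 \circ v_1 \rangle$ acts by the composite pre-post-concatenation, which reduces to the associativity of concatenation of traces asserted in Definition \ref{def:boundaryoperators}, together with Lemma \ref{lem:boundariesstartend} ensuring that the boundary and degeneracy maps intertwine with these concatenation maps.

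The main obstacle I anticipate is not logical but notational: aligning the precise definition of natural homology used in \cite{Dubut} (which is formulated via the fundamental category and the trace space with its compact--open topology) with the simplicial-set presentation $ST(X)(a,b)$ used here, and making sure that the factorization category $\Fact{\Te_1(X)}$ employed in the lemma corresponds exactly to the base category over which \cite{Dubut} fibers its natural system. Once this translation is spelled out, the functoriality argument reduces to the elementary observation that concatenation of traces defines continuous, and hence simplicially induced, maps on trace spaces, whose effect on singular homology is the one described.
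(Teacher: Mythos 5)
Your proposal is correct and follows the same route as the paper: the paper's entire proof is the one-line observation that the statement is a direct consequence of Lemma \ref{lem:singsimpset} (i.e.\ that $ST(X)(a,b)$ is, by curryfication, the singular simplicial set of the trace space from $a$ to $b$), which is exactly the core of your argument. Your additional verifications of functoriality over the factorization category are the bookkeeping the paper leaves implicit.
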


\begin{proof}
Direct consequence of Lemma \ref{lem:singsimpset}.
\end{proof}

We will get back to relations between the directed homology algebras we are introducing in the next section, with natural homology, in Section \ref{sec:elemprop}.


\begin{definition}
\label{def:RX}
For $i\geq 1$, we define the $R$-algebra $R_i[X]$ to be the category algebra of ${\cal T}_i(X)$. 
\end{definition}

The elements $e^i_a$ of $R_i[X]$ which correspond in this algebra to the constant $i$-traces on $a$ are the idempotents in $R_i[X]$. 

\begin{lemma}
\label{lem:Rmodoutofalg}
For all $a$, $b \in X$, $e^i_a R_i[X] e^i_b$ is the free $R$-module generated by $T_i(X)(a,b)$. 
\end{lemma}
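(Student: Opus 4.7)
The plan is simply to unwind the definition of the convolution algebra in Definition \ref{def:categoryalgebra} applied to $\mathcal{T}_i(X)$, and to compute how the distinguished idempotents act on basis elements. This lemma is really the instance of Remark \ref{rem:decomppath} for the specific category $\mathcal{T}_i(X)$, but a direct verification is short and self-contained.

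First I would fix notation: by Definition \ref{def:RX}, the underlying $R$-module of $R_i[X]$ is free on the set of morphisms of $\mathcal{T}_i(X)$, that is, on $\bigsqcup_{c,d \in X} T_i(X)(c,d)$. The element $e^i_a$ is, by construction, the basis element corresponding to the identity of $\mathcal{T}_i(X)$ at $a$, i.e.\ the constant $i$-trace at $a$, which plays the role of a unit on the ``diagonal'' component but is not a global unit of $R_i[X]$.

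Next I would compute $e^i_a \times p \times e^i_b$ on an arbitrary basis element $p \in T_i(X)(c,d)$. Using the convolution product
\[
f \times g = \begin{cases} g \circ f & \text{if composable} \\ 0 & \text{otherwise}\end{cases}
\]
one checks that $e^i_a \times p$ is nonzero precisely when $a = c$, in which case it equals $p \circ \mathrm{id}_a = p$; symmetrically, $p \times e^i_b$ is nonzero precisely when $b = d$, in which case it equals $p$. Hence for any basis element $p$,
\[
e^i_a \times p \times e^i_b = \begin{cases} p & \text{if } p \in T_i(X)(a,b) \\ 0 & \text{otherwise.}\end{cases}
\]
Extending by $R$-linearity to an arbitrary element $x = \sum_j r_j p_j$ of $R_i[X]$, we get $e^i_a \times x \times e^i_b = \sum_{p_j \in T_i(X)(a,b)} r_j\, p_j$. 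This shows that $e^i_a R_i[X] e^i_b$ is contained in, and indeed equal to, the $R$-linear span of $T_i(X)(a,b)$ inside $R_i[X]$.

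Finally, since $T_i(X)(a,b)$ is a subset of the distinguished free basis of $R_i[X]$, its elements are $R$-linearly independent, so the span is free on $T_i(X)(a,b)$. There is no real obstacle here; the only point requiring mild care is keeping track of the direction convention $f\times g = g\circ f$ of Definition \ref{def:categoryalgebra}, which ensures that left multiplication by $e^i_a$ selects traces starting at $a$ while right multiplication by $e^i_b$ selects traces ending at $b$.
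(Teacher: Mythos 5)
Your proof is correct and follows the same route as the paper: decompose the free module $R_i[X]$ over the disjoint union $\bigsqcup_{c,d}T_i(X)(c,d)$ and check that multiplying by the idempotents $e^i_a$, $e^i_b$ picks out exactly the $(a,b)$-component. The paper states this in two sentences; you have merely spelled out the computation $e^i_a \times p \times e^i_b$ explicitly, including the correct handling of the convention $f\times g=g\circ f$.
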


We write $R_i[X](a,b)$ for $e^i_a R_i[X] e^i_b$. 

\begin{proof}
By Definitions \ref{def:categoryalgebra} and \ref{def:RX}, $R_i[X]$ is, as a $R$-module, the coproduct of all free $R$-modules on $T_i(X)(a,b)$, with $a$, $b$ varying over $X$. The algebra operation with idempotents $e^i_a$ and $e^i_b$ does the rest. 
\end{proof}

\begin{theorem}
\label{lem:simplicialalgebra}
The collection of $R$-algebras $R[X]=(R_{i+1}[X])_{i \geq 0}$ can be given the structure of a Kan simplicial object in $Alg$. 

Furthermore 
this simplicial structure restricts to 
$$R[X](a,b)=(R_{i+1}[X](a,b))_{i \geq 0}$$ 
which is the free (Kan) simplicial $R$-module on $ST(X)(a,b)$. 
\end{theorem}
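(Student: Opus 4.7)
The plan is to lift the simplicial set structure on $ST(X)=(T_{i+1}(X))_{i\geq 0}$ from Lemma \ref{lem:singsimpset} to $R[X]$ by $R$-linearly extending the face and degeneracy operators $d_j$ and $s_k$. At each level, $R_{i+1}[X]$ is by construction a free $R$-module on $T_{i+1}(X)$ (as a module), so the set-level maps extend uniquely to $R$-linear maps. The simplicial identities then hold on basis elements by Lemma \ref{lem:singsimpset}, and pass to the full module by linearity.

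The substantive step is to verify that these $R$-linear extensions are morphisms in $Alg$, i.e.\ they respect the convolution product. On basis elements $p, q \in T_{i+1}(X)$, two cases arise. If $p$ and $q$ are non-composable then $p \times q = 0$; by Lemma \ref{lem:boundariesstartend}, $d_j$ and $s_k$ preserve source and target, so $d_j(p), d_j(q)$ (respectively $s_k(p), s_k(q)$) remain non-composable, hence $d_j(p) \times d_j(q) = 0 = d_j(p \times q)$. Otherwise $p \times q = p*q$, and one checks directly that the face/degeneracy maps commute with concatenation: unpacking definitions,
$$d_j(p*q)(s, t_0,\ldots, t_{i-1}) = (p*q)(s, \delta_j(t_0,\ldots,t_{i-1})),$$
which splits into $p(2s, \delta_j(\ldots))$ for $s \leq \tfrac{1}{2}$ and $q(2s-1, \delta_j(\ldots))$ otherwise. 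This is visibly $d_j(p) * d_j(q)$, because $\delta_j$ acts on the simplex coordinate while concatenation acts on the independent interval coordinate. The identical argument works for $s_k$. Combined with the simplicial identities, this exhibits $R[X]$ as a simplicial object in $Alg$.

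For the restriction, Lemma \ref{lem:Rmodoutofalg} identifies $R_{i+1}[X](a,b) = e^i_a R_{i+1}[X] e^i_b$ with the free $R$-module on $T_{i+1}(X)(a,b)$, and Lemma \ref{lem:boundariesstartend} ensures that $d_j$ and $s_k$ restrict to $R$-linear maps between these free submodules. By construction, the resulting simplicial $R$-module is the level-wise free $R$-module applied to the simplicial set $ST(X)(a,b)$, which is precisely the free simplicial $R$-module on $ST(X)(a,b)$.

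Finally, the Kan property for both $R[X]$ and $R[X](a,b)$ follows from Moore's classical theorem that every simplicial group is Kan: the additive $R$-module structure yields an underlying simplicial abelian group, which is automatically Kan, and the Kan condition depends only on the underlying simplicial set. The main obstacle in this proof is really the algebra-morphism verification, and it resolves cleanly once one notices that $\delta_j$ and $\sigma_k$ act only on simplex coordinates while concatenation acts only on the interval coordinate of $\I \times \Delta_i$, so the two operations are independent.
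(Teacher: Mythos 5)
Your proof is correct and follows essentially the same route as the paper: linear extension of the face and degeneracy maps, verification of the algebra-morphism property on basis elements via the independence of the simplex and interval coordinates, identification of the restriction with the free simplicial $R$-module via Lemma \ref{lem:Rmodoutofalg}, and the Kan property from the classical fact about simplicial groups. Your treatment of the non-composable case (noting explicitly that $d_j$ and $s_k$ preserve endpoints, so non-composability is preserved) is slightly more careful than the paper's, but it is the same argument.
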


\begin{proof}
Indeed, we can extend by linearity the $d_j$ and $s_k$ operators from $T_{i+1}(X)$ to $T_i(X)$, resp. from $T_i(X)$ to $T_{i+1}(X)$, to the underlying $R$-module of the algebra $R[X]$. 

Now, we have to prove that all these maps are algebra maps. We compute, for any $(i+1)$-trace $p$ from $r$ to $s$, and $q$ $(i+1)$-trace from $s$ to $t$: 
$$
\begin{array}{rcl}
d_j(p\times q)(s,t_0,\ldots,t_i) & = & p\times q(s,d_j(t_0,\ldots,t_{i})) \\
& = & 
\left\{\begin{array}{cc}
p(2s,d_j(t_0,\ldots,t_{i})) & \mbox{if $s\leq \frac{1}{2}$} \\
     q(2s-1,d_j(t_{0},\ldots,t_{i})) & \mbox{if $\frac{1}{2} \leq s \leq 1$}
\end{array}\right.
\end{array}
$$
Hence: 
\begin{equation}
d_j(p\times q)(s,t_0,\ldots,t_i) =  (d_j(p)\times d_j(q))(s,t_0,\ldots,t_i)
\label{eq:bound}
\end{equation}
\noindent and similarly for the degeneracy operators. 
This holds as well when $p\times q=0$: $d_j(p\times q)=0=d_j(p)\times d_j(q)$, and similarly for degeneracy operators $s_k$. 
The $d_j$ and $s_k$ operators being linear, this equation holds for all elements of $R[X]$. 

The free simplicial $R$-module on $ST(X)(a,b)$, for all $a$, $b\in X$, adjoint to the forgetful functor from simplicial $R$-modules to simplicial sets, is given by the collection of free $R$-modules generated by each $T_i(X)(a,b)$, $i\geq 1$, together with linear extensions of degeneracy and boundary maps in $ST(X)(a,b)$. The fact that these free $R$-modules are equal to $R_{i}[X](a,b)$ for all $a$, $b \in X$, is a direct consequence of Lemma \ref{lem:Rmodoutofalg}.

Finally, being Kan is already the case for simplicial groups \cite{May}, and  simplicial algebras are in particular simplicial (abelian) groups. 
\end{proof}

Note that we have even an augmentation in the underlying semi-simplicial algebra structure $R[X]$, with the augmentation given by   
 the following $R$-algebra, that we call $R_0[X]$ to keep our indexes consistent. Its underlying $R$-module is generated by pairs $(x,y)$ of points in $X$, such that there exists a directed path from $x$ to $y$ in $X$.  The algebra operation is, for $(x,y)$ and $(z,t) \in R_0[X]$:
$$(x,y) \times (z,t)=\left\{\begin{array}{ll}
(x,t) & \mbox{if $y=z$}\\
0 & \mbox{otherwise}
\end{array}\right.
$$
\noindent and extended by linearity. 
This augmentation map is $\partial_0: \ R_1[X] \rightarrow R_0[X]$, defined on paths $p$, i.e. generators of $R_1[X]$, by $\partial_0(p)=(x,y)$ where $x$ (respectively $y$) is the start (respectively the end) of path $p$. 
This map induces also maps, that we still denote by $\partial_0$, from $R_n[X]$, $n \geq 1$, to $R_0[X]$, in an obvious way. 

This is in general not an augmentation for the simplicial algebra structure, since this would mean we would have a map $\sigma_0: \ R_0[X] \rightarrow R_1[X]$ with $\partial_0 \circ \sigma_0=Id$, i.e. $\sigma_0$ would be a global section of the analogue of the "path fibration" $\chi$ in the directed space $X$, defined and studied in e.g. \cite{dirtop}: in this case $X$ would be dicontractible. In the sequel, we will instead see $\partial_0$ as an encoding of a bigrading of the simplicial algebra $R[X]$. 


\begin{remark}
\label{rem:functoriality}
We note the following: 
\begin{itemize}
\item In the proof above, we have shown, en passant, that the boundary and degeneracy operators of Definition \ref{def:boundaryoperators} give the sequence $({\cal T}_i)_{i\in \N}$ of categories, the structure of a simplicial object in Cat. 
\item 
Going from directed spaces $X$ to the simplicial object in $Alg$ $R[X]$ is not functorial in general, but it is for morphisms of directed spaces $f: \ X \rightarrow Y$ which are injective on points, as a consequence of Remark \ref{rem:notfunctorial}. 
\end{itemize}
\end{remark}

We can now define the homology algebra of a directed space and characterize its elements.



\section{Directed homology algebras}

\label{sec:diralg}

The construction of the homology from a simplicial object in $Alg$ is more involved than in the classical case, in order to control the quotients that have to be made, modulo boundaries. 

It is shown in \cite{VanderLinden} that the Moore normalization of a simplicial object (here in the semi-abelian category $Alg$) is a proper chain complex, meaning that the boundary map is proper, from which we can take the "classical" definition of homology, as kernel over image. 

Indeed, taking the "naive" definition of the chain complex out of the simplicial object in $Alg$, as well as the classical (unnormalized) differential $\partial$ out of it would give homology algebras to be $H_n=Ker \ \partial/I(\partial)$ where $I(\partial)$ is the two-sided ideal generated by $Im \ \partial$ ($\partial$ is indeed not proper in general). 
We would have little hope to get any classical homology long exact sequences from this definition, and it would even be difficult to characterize $H_n$ as a $R$-module. 

We will see that  the semi-abelian approach gives us something which is closely linked to the approach using natural systems \cite{Dubut}, see Proposition \ref{lem:charactHA}. 

For being as much self-contained as possible, we review this construction applied to the semi-abelian category of algebras: 

\begin{definition}[Moore normalization]
Let $S$ be a simplicial object in $Alg$, with boundary maps $d^n_i : \ S_n \rightarrow S_{n-1}$ and degeneracy maps $s^n_i: \ S_n \rightarrow S_{n+1}$, $i=0,\ldots,n$. Its Moore normalization is the following chain complex of algebras, $NS$: 
\begin{itemize}
\item it is in degree 
$n$
 the joint kernel $(NS)_n=\mathop{\cap}\limits_{i=0}^{n-1} ker \ d_i^n$
of all face maps except the $n$th face;
\item with differential given by the remaining $n$th-face map
$\partial^n=(-1)^n d_n^n: \ (NS)_n \rightarrow (NS)_{n-1}$
\end{itemize}
\end{definition}

As well-known \cite{VanderLinden} (and easy to check directly, this was already well-known for simplicial groups, see e.g. \cite{May}), $NS$ is a proper chain complex of algebras, from which we can take the homology of, $H_*(NS)$. For $X$ a directed space, we call $NR[X]$ the normalized simplicial set of the simplicial set $(R_{i+1}[X])_{i \in \N}$, and we can define: 

\begin{definition}
The homology algebra of a directed space $X$ is defined as the the homology $(HA_{i+1}(X))_{i\geq 0}$ in the semi-abelian category $Alg$ of the simplicial object defined in Theorem  \ref{lem:simplicialalgebra}, shifted by one, i.e. for all $i\geq 0$: 
$$
HA_{i+1}(X) = H_i(NR[X])
$$
i.e. is the classical homology of the normalized simplicial algebra $NR[X]$. 
For consistency purpose, we set 
$$HA_0(X)=R_0[X]$$
\end{definition}

\begin{remark}
Note that maps $\partial_0: \ R_n[X] \rightarrow R_0[X]$, $n\geq 0$, induces maps, still denoted by $\partial_0$, from $HA_n(X)$ to $R_0[X]$, i.e. all $HA_n(X)$ are bigraded by $R_0[X]$.  In what follows, we write $HA_n(X)(a,b)$ for the sub-$R$-module $\{ x\in HA_n(X) \ \mid \ \delta_0(x)=(a,b)\}$. 
\end{remark}



\section{Elementary properties of directed homology algebras}

\label{sec:elemprop}
In this section, we begin by noting that directed homology algebras form algebraic invariants of directed spaces (up to directed homeomorphism), hence are a tool for classifying directed spaces. We then go on by giving elementary algebraic properties of $HA_1$ and then of $HA_n$, $n > 1$: $HA_1$ is shown to be the algebra of classes of dipaths paths modulo homology with fixed extremities, and $HA_n$, $n>1$ is shown to be merely a module (with null multiplication) which is the direct sum of modules of $n$-traces modulo homology with fixed extremities. 
 
 \paragraph{Invariance under dihomeomorphism}
 
In what follows, we denote by $\mathcal{D}_I$ for the subcategory of directed spaces with injective directed maps. 

Let $p$ be an $i$-path in $X$: $p$ is a map from $\I \times \Delta_{i-1} \rightarrow X$. Then, for $f: \ X \rightarrow Y$ a morphism of directed spaces,  $f \circ p: \ \I\times \Delta_{i-1} \rightarrow Y$ is the $i$-path which is the direct image of $p$ by $f$. 
This indeed carries over to a mapping of $i$-traces and $\tilde{f}$ defines a map from $\mathcal{T}_i(X)$ to $\mathcal{T}_i(Y)$. By Remark \ref{rem:functoriality}, this defines a map from the algebra $R_i[X]$ to $R_i[Y]$, by injectivity of $f$. This map obviously commutes with the simplicial structure, thus carries over the homology construction of Section \ref{sec:diralg}. We thus have proved: 


\begin{lemma}
\label{lem:diralgfunct}
The directed homology algebra construction defines a functor for each $n \in \N$, $HA_n: \ \mathcal{D}_I \rightarrow Alg$. 
\end{lemma}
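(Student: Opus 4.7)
The plan is to assemble functoriality layer by layer, using injectivity of $f \in \mathcal{D}_I$ at exactly the one place where the earlier remark warns us it is needed, namely when passing from trace categories to their category algebras.

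First I would define the action on morphisms. Given an injective d-map $f: X \to Y$, post-composition with $f$ sends an $i$-path $p: \I \times \Delta_{i-1} \to X$ to the $i$-path $f \circ p$ in $Y$; this clearly respects reparametrization and the boundary conditions of the definition of $i$-traces, so it induces a map $\tilde{f}_i: T_i(X) \to T_i(Y)$. Concatenation is preserved on the nose by post-composition, so in fact $\tilde{f}_i$ extends to a functor $\Te_i(f): \Te_i(X) \to \Te_i(Y)$ between the trace categories of Definition \ref{def:boundaryoperators}. Crucially, since $f$ is injective on underlying points, this functor is injective on objects.

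Next I would promote this to a morphism of simplicial algebras. By Remark \ref{rem:notfunctorial}, an injective-on-objects functor linearizes to an algebra morphism, so $\tilde{f}_i$ yields $R_i[f]: R_i[X] \to R_i[Y]$ in $Alg$. To check compatibility with the simplicial structure of Theorem \ref{lem:simplicialalgebra}, one just observes that the face and degeneracy operators $d_j$, $s_k$ of Definition \ref{def:boundaries} are defined by pre-composition with $(\text{Id}, \delta_j)$, $(\text{Id}, \sigma_k)$, whereas $R_i[f]$ is induced by post-composition with $f$; these two operations commute, so $R[f] = (R_{i+1}[f])_{i \geq 0}$ is a map of simplicial objects in $Alg$.

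From here the passage to homology is standard in the semi-abelian setting. The Moore normalization $N$ is a functor on simplicial objects, so $R[f]$ induces a chain map $NR[f]: NR[X] \to NR[Y]$ of proper chain complexes in $Alg$; taking homology in the semi-abelian category $Alg$ as recalled in Section \ref{sec:diralg} then yields morphisms $HA_n(f): HA_n(X) \to HA_n(Y)$ for each $n \geq 1$, and in degree zero the map $R_0[f]: R_0[X] \to R_0[Y]$ sending $(x,y)$ to $(f(x), f(y))$ (well-defined and injective because $f$ is a d-map and injective). Finally, functoriality $HA_n(\mathrm{id}_X) = \mathrm{id}$ and $HA_n(g \circ f) = HA_n(g) \circ HA_n(f)$ follows at the level of $i$-traces since $\widetilde{g \circ f} = \tilde{g} \circ \tilde{f}$, and this identity is inherited by each subsequent functorial construction (linearization, Moore normalization, homology).

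The main obstacle, and essentially the only nontrivial point, is the one flagged by Remark \ref{rem:notfunctorial}: without injectivity on points the induced linear maps on category algebras would fail to respect multiplication, because two non-composable morphisms in $\Te_i(X)$ can become composable after applying a non-injective functor, corrupting the convolution product. Restricting to $\mathcal{D}_I$ is precisely what rules this out and makes the whole pipeline functorial.
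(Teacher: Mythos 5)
Your proposal is correct and follows essentially the same route as the paper: post-composition with an injective d-map induces maps of $i$-traces, hence (by the injectivity-on-objects observation of Remark \ref{rem:notfunctorial}) algebra morphisms $R_i[X]\to R_i[Y]$ commuting with the simplicial structure, which then pass through Moore normalization and homology. You simply spell out in more detail the steps the paper compresses into "this map obviously commutes with the simplicial structure, thus carries over the homology construction."
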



As a consequence we now have that directed homology algebras are algebraic invariants of directed spaces: 

\begin{proposition}
Let $X$ and $Y$ be two dihomeomorphic directed spaces. Then for all $n\in \N$, $HA_n(X)$ and $HA_n(Y)$ are isomorphic algebras.
\end{proposition}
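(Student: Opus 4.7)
The plan is to reduce this proposition to a near-tautological consequence of Lemma \ref{lem:diralgfunct}, which already supplies functoriality of $HA_n$ on the subcategory $\mathcal{D}_I$ of directed spaces with injective d-maps. Functors preserve isomorphisms, so I essentially just need to observe that every dihomeomorphism lies in $\mathcal{D}_I$ together with its inverse.

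First, I would unpack what a dihomeomorphism $f: X \to Y$ provides: a bijective d-map whose set-theoretic inverse $f^{-1}: Y \to X$ is also a d-map. Being bijective, both $f$ and $f^{-1}$ are in particular injective on points, hence both are morphisms of $\mathcal{D}_I$. This is the crucial observation that allows us to apply Lemma \ref{lem:diralgfunct}, since as pointed out in Remark \ref{rem:functoriality}, the passage $X \mapsto R[X]$ (and consequently $X \mapsto HA_n(X)$) only behaves functorially for d-maps that are injective on objects.

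Next, I would invoke the functor $HA_n: \mathcal{D}_I \to Alg$ to obtain two algebra morphisms $HA_n(f): HA_n(X) \to HA_n(Y)$ and $HA_n(f^{-1}): HA_n(Y) \to HA_n(X)$. Functoriality then gives
\[
HA_n(f) \circ HA_n(f^{-1}) = HA_n(f \circ f^{-1}) = HA_n(\mathrm{Id}_Y) = \mathrm{Id}_{HA_n(Y)},
\]
and symmetrically $HA_n(f^{-1}) \circ HA_n(f) = \mathrm{Id}_{HA_n(X)}$. Thus $HA_n(f)$ is an isomorphism in $Alg$, so $HA_n(X) \cong HA_n(Y)$ as $R$-algebras.

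There is essentially no obstacle here beyond bookkeeping: the only subtle point, which is the restriction to injective-on-objects morphisms from Remark \ref{rem:notfunctorial}, is automatically satisfied because dihomeomorphisms are bijective. One could add, for completeness, that the case $n=0$ also holds since $R_0[X]$ depends only on the set of pairs $(x,y)$ with a directed path from $x$ to $y$, and a dihomeomorphism preserves the existence of such directed paths in both directions, so it induces a bijection respecting the algebra multiplication of Section \ref{sec:simpalg}.
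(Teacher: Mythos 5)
Your proof is correct and follows the same route as the paper: both arguments note that a dihomeomorphism and its inverse are injective on points, hence lie in $\mathcal{D}_I$, and then apply the functoriality of $HA_n$ from Lemma \ref{lem:diralgfunct} to turn $f\circ f^{-1}=\mathrm{Id}$ and $f^{-1}\circ f=\mathrm{Id}$ into mutually inverse algebra morphisms. Your extra remark on the $n=0$ case is a harmless addition that the paper leaves implicit.
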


\begin{proof}
Let $f: \ X \rightarrow Y$ with inverse $g: \ Y \rightarrow X$ be a dihomeomorphism between $X$ and $Y$. In particular $f$ and $g$ are injective so they induce algebra maps, for any $n\in \N$, $\tilde{f}: \ HA_n(X) \rightarrow HA_n(Y)$ and $\tilde{g}: \ HA_n(Y) \rightarrow HA_n(X)$ by Lemma \ref{lem:diralgfunct}. 
By functoriality (Lemma \ref{lem:diralgfunct} again), $f\circ g=g\circ f=Id$ implies $\tilde{f}\circ\tilde{g}=\tilde{g}\circ\tilde{f}=Id$. 
\end{proof}

\paragraph{Characterization of directed homology algebras}

Now we characterize these homology algebras. 

In the classical case of simplicial abelian groups \cite{May}, the homology of the normalized complex and of the unnormalized one are isomorphic. It is not in general the case for algebras, since the congruence generated by the image of the non-normalized boundary operator may differ significantly than the image of the unnormalized boundary operator. 

Still, it is the case the two coincide for $n=1$: 

\begin{lemma}
\label{lem:HA1}
$HA_1(X)$ is the coequalizer of $d_0$ and $d_1$ from $R_2[X]$ to $R_1[X]$. More explicitely, this is:
$$
HA_1(X) = R_1[X]/\sim 
$$ 
where $p$ and $q \in R_1[X]$ are such that $p \sim q$ if and only if there exists $z\in R_2[X]$ with: 
$$p=q+(d_0-d_1)(z)$$
\end{lemma}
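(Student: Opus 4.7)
The plan is to unfold the definition of $HA_1(X) = H_0(NR[X])$ through the Moore normalization and then identify the resulting quotient with the claimed coequalizer via a standard simplicial-identities computation.

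First I would write out the bottom of the Moore complex. With the indexing $S_i = R_{i+1}[X]$, one has $(NR[X])_0 = R_1[X]$, while $(NR[X])_1 = \ker(d_0 : R_2[X] \rightarrow R_1[X])$, and the boundary is $\partial^1 = -d_1|_{\ker d_0}$. The semi-abelian machinery from \cite{VanderLinden} invoked in Section \ref{sec:diralg} guarantees that $\partial^1$ is a proper morphism in $Alg$, so its image $d_1(\ker d_0)$ is already a two-sided ideal of $R_1[X]$, and $H_0(NR[X]) = \mathrm{coker}(\partial^1) = R_1[X]/d_1(\ker d_0)$, both as $R$-module and as algebra.

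Next I would establish the identification $d_1(\ker d_0) = \mathrm{Im}(d_0 - d_1)$ as $R$-submodules of $R_1[X]$, using the simplicial identities $d_0 s_0 = d_1 s_0 = \mathrm{id}$ (valid on the simplicial set of traces before linearization, hence after). Given any $z \in R_2[X]$, set $w = z - s_0(d_0 z)$; then $d_0 w = d_0 z - d_0 s_0 d_0 z = 0$, so $w \in \ker d_0$, and
$$ d_1 w \;=\; d_1 z - d_1 s_0 d_0 z \;=\; d_1 z - d_0 z \;=\; -(d_0 - d_1)(z). $$
This shows $\mathrm{Im}(d_0 - d_1) \subseteq d_1(\ker d_0)$. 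Conversely, for $w \in \ker d_0$, the trivial computation $d_1 w = d_1 w - d_0 w = -(d_0 - d_1)(w)$ gives the reverse inclusion.

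Combining these steps yields $HA_1(X) = R_1[X]/\mathrm{Im}(d_0 - d_1)$. Because $\mathrm{Im}(d_0 - d_1) = d_1(\ker d_0)$ is already a two-sided ideal, the algebraic quotient coincides with the $R$-module quotient by $\sim$, and by the description of $coKer(f,g)$ in Section \ref{sec:catalg} this is exactly the coequalizer of $d_0$ and $d_1$ in $Alg$. The main obstacle is ensuring, in the semi-abelian setting, that the submodule generated by $\mathrm{Im}(d_0 - d_1)$ really equals its ideal closure: classically in abelian groups both sides are just submodules, but in $Alg$ one needs properness of $\partial^1$ (equivalently, normality of the image of the Moore boundary), which is precisely what makes the two descriptions in the lemma match.
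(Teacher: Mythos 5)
Your proof is correct, but it is organised differently from the paper's. The paper first invokes Corollary 2.2.13 of \cite{VanderLinden} to identify $HA_1(X)$ with the coequalizer of $d_0$ and $d_1$, observes that this coequalizer is the quotient of $R_1[X]$ by the two-sided ideal \emph{generated} by $Im(d_0-d_1)$, and then shows by a direct whiskering computation that this ideal is no bigger than the $R$-submodule $Im(d_0-d_1)$ itself: any product $u\times(d_0-d_1)(y)\times v$ is rewritten as $(d_0-d_1)\bigl(s_0(u)\times y\times s_0(v)\bigr)$ using $d_is_0=\mathrm{id}$ and the fact that $d_0,d_1$ are algebra maps. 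You instead unfold the Moore normalization directly, obtain $HA_1(X)=R_1[X]/d_1(\ker d_0)$, and prove the set-level identity $d_1(\ker d_0)=Im(d_0-d_1)$ via the degeneracy splitting $z\mapsto z-s_0(d_0z)$; normality of the ideal you then import from the general properness of the Moore complex in a semi-abelian category rather than verifying it by hand. Both routes rest on the same two ingredients (the simplicial identities $d_0s_0=d_1s_0=\mathrm{id}$ and the fact that the face and degeneracy operators are algebra morphisms, Theorem \ref{lem:simplicialalgebra}), but your version makes the identification with the normalized complex explicit and delegates normality to the abstract theory, whereas the paper's version gives a self-contained, hands-on proof that $Im(d_0-d_1)$ is already a two-sided ideal --- a fact that is reused implicitly in the later module-level computations. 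You correctly flag that the equality of the module quotient and the algebra quotient is the real content of the lemma; it would strengthen your write-up to note that the paper's whiskering identity gives an independent, elementary verification of exactly that point, so the lemma does not actually depend on the black-box properness theorem.
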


\begin{proof}
We know by Corollary 2.2.13 of \cite{VanderLinden} that $HA_1(X)$ is the coequalizer of $d_0$ and $d_1$. 

The coequalizer of $d_0$ and $d_1$ is the quotient of $R_1[X]$ by the two-sided ideal generated by the image of $d_0-d_1$, see Section \ref{sec:backgroundalgebra}. The formula for the two-sided ideal generated by a set is an easy check:
$p \sim q$ if and only if there exists $u_j$, $v_j \in R_1[X]$ and $x_j \in Im \ (d_0-d_1)$, $j=1,\ldots,k$ with: 
$$p=q+\sum\limits_{j=1}^k u_j\times x_j\times v_j$$
As $x_j \in Im \ (d_0-d_1)$, we can write $x_j=(d_0-d_1)(y_j)$ for some $y_j \in R_2[X]$. Now, 
$$\begin{array}{rcl}
u_j \times (d_0-d_1)(y_j)\times v_j & = & u_j \times d_0(y_j)\times v_j - u_j\times d_1(y_j)\times v_j\\
& = & d_0(s_0(u_j) \times y_j\times s_0(v_j)) - d_1(s_0(u_j)\times y_j\times s_0(v_j)) \\
& = & (d_0-d_1)(s_0(u_j) \times y_j\times s_0(v_j))
\end{array}
$$
since by the simplicial identities and as $d_0$ and $d_1$ are algebra maps, Theorem \ref{lem:simplicialalgebra}. Therefore, $p \sim q$ if and only if there exists $z_j$ (equal to $s_0(u_j)\times y_j\times s_0(v_j)$) such that $p=q+\sum\limits_{j=1}^k(d_0-d_1)(z_j)=q+(d_0-d_1)(z)$ with $z=\sum_{j=1}^k z_j$. 
\end{proof}

We deduce that:

\begin{lemma}
For all $p \in T_1(X)$, $HA_1(X)(p_s,p_t)=HN_1(X)(p)$ as $R$-modules.
\end{lemma}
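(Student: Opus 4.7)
The plan is to use Lemma \ref{lem:HA1} as the starting point and combine it with the bigrading decomposition $R_n[X]=\bigoplus_{a,b\in X}R_n[X](a,b)$ coming from Remark \ref{rem:decomppath} and Lemma \ref{lem:Rmodoutofalg}. Concretely, by Lemma \ref{lem:HA1} the $R$-module underlying $HA_1(X)$ is $R_1[X]/\sim$, where $p\sim q$ iff $p-q=(d_0-d_1)(z)$ for some $z\in R_2[X]$. Since $HA_1(X)(p_s,p_t)$ is the image under the quotient map of the bigraded component $R_1[X](p_s,p_t)$, the task reduces to showing that the equivalence relation respects this bigrading, and then to identifying the resulting quotient with $HN_1(X)(p)$.

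The first technical step is to verify the compatibility of $\sim$ with the bigrading. Given $z\in R_2[X]$, decompose it using Remark \ref{rem:decomppath} as $z=\sum_{a,b} z_{a,b}$ with $z_{a,b}\in R_2[X](a,b)$. By Lemma \ref{lem:boundariesstartend}, both $d_0(z_{a,b})$ and $d_1(z_{a,b})$ lie in $R_1[X](a,b)$, so each summand $(d_0-d_1)(z_{a,b})$ stays in its own bigrading component. Combining this with the direct sum decomposition of $R_1[X]$, one concludes that the restriction of $\sim$ to $R_1[X](p_s,p_t)$ is exactly the relation: $p\sim q$ iff $p-q=(d_0-d_1)(z')$ for some $z'\in R_2[X](p_s,p_t)$. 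Therefore, as an $R$-module,
\[
HA_1(X)(p_s,p_t)\;\cong\;R_1[X](p_s,p_t)\;/\;\mathrm{im}\!\left((d_0-d_1)\!:\!R_2[X](p_s,p_t)\to R_1[X](p_s,p_t)\right).
\]

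The final step is to recognise the right-hand side as $HN_1(X)(p)$. By Lemma \ref{lem:Rmodoutofalg}, $R_i[X](p_s,p_t)$ is the free $R$-module on $T_i(X)(p_s,p_t)$ for each $i$, and by Lemma \ref{lem:singsimpset} these modules, together with the linear extensions of the $d_j$ and $s_k$, form the chain and degeneracy structure of the simplicial $R$-module freely generated by the simplicial set $ST(X)(p_s,p_t)$. The quotient displayed above is then, by definition, the coequaliser of $d_0,d_1$ in this simplicial module, which matches the description of $HN_1(X)(p)$ as supplied by Lemma \ref{lem:naturalhomology} applied to the 1-trace $p$ from $p_s$ to $p_t$.

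The main obstacle is the bigrading compatibility of step two: the ideal generated by $\mathrm{im}(d_0-d_1)$ inside the full algebra $R_1[X]$ could a priori mix bigrading components via the algebra multiplication, but Lemma \ref{lem:HA1} has already reduced the relation to a module-theoretic one (a sum of terms $(d_0-d_1)(z_j)$ with no outer multiplications), so no further conjugation by elements of $R_1[X]$ is needed. Once this subtlety is observed, the remaining work is a straightforward unpacking of definitions.
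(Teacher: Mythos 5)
Your proof is correct and follows essentially the same route as the paper's: reduce via Lemma \ref{lem:HA1} to the module-theoretic relation $p-q=(d_0-d_1)(z)$, decompose $z$ along the bigrading so that only the $(p_s,p_t)$ component contributes, and identify the resulting quotient with the homology of the free simplicial $R$-module on $ST(X)(p_s,p_t)$ via Lemma \ref{lem:naturalhomology}. Your closing observation that Lemma \ref{lem:HA1} is what prevents the two-sided ideal from mixing bigrading components is exactly the point the paper relies on implicitly.
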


\begin{proof}
Take $p$ and $q\in R_1[X](p_s,p_t)$ such that $p \sim q$. There exists, by Lemma \ref{lem:HA1}, $z \in R_2[X]$ such that $p-q=(d_0-d_1)(z)$. 


This $z$ can be decomposed as a sum of $z^{a,b}\in R_2[X](a,b)$, $a$, $b \in X$ by Remark \ref{rem:decomppath}. Also by Theorem \ref{lem:simplicialalgebra}, 
$(d_0-d_1)(z^{a,b})\in R_1[X](a,b)$ so $p-q=(d_0-d_1)(z^{p_s,p_t})$. 

By Theorem \ref{lem:simplicialalgebra}, $R[X](a,b)$ is the free simplicial $R$-module on $ST(X)(a,b)$ so $p \sim q$ is equivalent to $p=q \ mod \ Im \ \partial$ where $\partial$ is the unnormalized differential of the free simplicial $R$-module on $ST(X)(p_s,p_t)$. 

Therefore $HA_1(X)$ which is composed, by Lemma \ref{lem:HA1}, of the classes of elements of $R_1[X]$ modulo $Im \ \partial$ is equal to the 0th homology group of $ST(X)(p_s,p_t)$, hence to $HN_1(X)(p)$, since $p$ is a 1-trace from $p_s$ to $p_t$, see Lemma  \ref{lem:naturalhomology}. 



\end{proof}

We will have the same result in all dimensions, at the module structure level, but we need first to understand the module structure of the directed homology algebras: 

\begin{lemma}
\label{lem:HARmod}
$HA_i(X)$ has as underlying $R$-module, the coproduct for $(a,b)\in R_0[X]$, of $HA_i(X)(a,b)$, which is equal to: 
$$Ker \ \partial^n_{\mid NR_n[X](a,b)} / Im \ \partial^{n+1}_{\mid NR_{n+1}[X](a,b)}$$ 
\end{lemma}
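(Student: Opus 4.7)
The plan is to push the $R$-module decomposition $R_n[X] = \bigoplus_{(a,b)} R_n[X](a,b)$ coming from Lemma \ref{lem:Rmodoutofalg} and Remark \ref{rem:decomppath} through the entire Moore-normalization and homology machinery. The statement is purely at the level of underlying $R$-modules, and in fact the decomposition will not respect the algebra product, since multiplication sends $R_n[X](a,b)\otimes R_n[X](c,d)$ to $R_n[X](a,d)$ when $b=c$ and to $0$ otherwise.

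First I would observe that by Lemma \ref{lem:boundariesstartend}, together with the linearization used in Theorem \ref{lem:simplicialalgebra}, each face map $d_j$ and degeneracy map $s_k$ of $R[X]$ restricts to a map $R_n[X](a,b) \to R_{n-1}[X](a,b)$, respectively $R_n[X](a,b) \to R_{n+1}[X](a,b)$. Kernels in $Alg$ agree with ordinary $R$-module kernels on underlying modules, hence the joint kernel defining the Moore normalization splits: writing $NR_n[X](a,b) = \bigcap_{i=0}^{n-1} Ker(d_i^n_{\mid R_n[X](a,b)})$, one obtains $NR_n[X] = \bigoplus_{(a,b)} NR_n[X](a,b)$ as an $R$-module, and the Moore differential $\partial^n=(-1)^n d_n^n$ restricts to $\partial^n: NR_n[X](a,b) \to NR_{n-1}[X](a,b)$.

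Second, since finite intersections and quotients distribute over direct sums of $R$-modules, taking homology of the Moore complex yields, as an $R$-module,
$$H_{n-1}(NR[X]) \;=\; \bigoplus_{(a,b)}\; Ker \, \partial^n_{\mid NR_n[X](a,b)} \,/\, Im \, \partial^{n+1}_{\mid NR_{n+1}[X](a,b)}.$$
To finish, I would identify each summand with the piece $HA_i(X)(a,b)$ defined via the bigrading $\partial_0$: since $\partial_0: R_n[X]\to R_0[X]$ sends every element of $R_n[X](a,b)$ to $(a,b)$ by construction, the induced map on homology sends any class represented by an element of $NR_n[X](a,b)$ to $(a,b)$; conversely, a class with $\partial_0$-value $(a,b)$ must lie in the $(a,b)$-summand, using that $\partial_0$ respects the direct-sum decomposition.

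The main obstacle is the bookkeeping verification that the Moore construction, and in particular the intersection of kernels that defines it, commutes with the $(a,b)$-grading. This is however essentially automatic once one has observed that each face map is diagonal with respect to the decomposition, since the intersection of graded submodules of a graded module is graded; the rest is formal. The one point worth highlighting in the writeup is that all the equalities here are $R$-module equalities and not algebra equalities, since, as already noted, the algebra structure on $R_n[X]$ mixes the summands indexed by different $(a,b)$.
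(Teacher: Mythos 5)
Your proposal is correct and follows essentially the same route as the paper: both arguments push the $(a,b)$-decomposition of $R_n[X]$ (equivalently, the restriction of the simplicial structure to the free simplicial $R$-modules $R[X](a,b)$ from Theorem \ref{lem:simplicialalgebra}) through the Moore normalization, observe that kernels, images and quotients of $R$-modules distribute over the direct sum, and conclude summand by summand. Your extra paragraph identifying each summand with the $\partial_0$-bigraded piece $HA_i(X)(a,b)$ is a detail the paper's proof leaves implicit, and your implicit reliance on properness of the Moore differential (so that the homology quotient is the plain $R$-module quotient by the image) matches what the paper makes explicit.
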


\begin{proof}
By Theorem \ref{lem:simplicialalgebra}, the simplicial structure on $R[X]$ restricts on $R[X](a,b)=(e^{i+1}_a R_{i+1}[X] e^{i+1}_b)_{i \geq 0}$ to the free simplicial $R$-module on $ST(X)(a,b)$, for all $a$, $b\in X$. Hence $Ker \ \partial^n_{\mid NR[X]}$ is the coproduct on all $a$, $b \in X$ of $Ker \ \partial^n_{\mid R_n[X](a,b)}$. Similarly for $Im \ \partial^{n+1}_{\mid NR[X]}$, which as a $R$-module is the coproduct of all the $Im \ \partial^{n+1}_{\mid R_{n+1}[X](a,b)}$. Now, $\partial^{n+1}$ is a proper map of algebras, 
hence the quotient of $Ker \ \partial^n$ by $Im \ \partial^{n+1}$ is, as a $R$-module, the coproduct of all the $Ker \ \partial^n_{\mid R_n[X](a,b)} / Im \ \partial^{n+1}_{\mid R_{n+1}[X](a,b)}$. 
\end{proof}

\begin{proposition}
\label{lem:charactHA}
Let $X$ be a directed space, $a, b \in X$, $R$ a ring. Then
$HA_n(X)$ is isomorphic, as a $R$-module, to the coproduct of $HN_n(X)(p)$ for $p$ varying on any set of paths, one for each pair $(a,b)$ in $X^2$. 
\end{proposition}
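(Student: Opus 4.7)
The plan is to combine Lemma \ref{lem:HARmod} with the classical identification, valid for any simplicial $R$-module, of the homology of the Moore-normalized complex with that of the unnormalized chain complex on the generating simplicial set.

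First, Lemma \ref{lem:HARmod} already supplies the desired decomposition at the module level: the underlying $R$-module of $HA_n(X)$ is the coproduct, indexed by pairs $(a,b) \in R_0[X]$, of the modules $HA_n(X)(a,b) = Ker \ \partial^n_{\mid NR_n[X](a,b)} / Im \ \partial^{n+1}_{\mid NR_{n+1}[X](a,b)}$. Note that $R_0[X]$ is generated by precisely those pairs $(a,b)$ such that there exists a directed path from $a$ to $b$; the other pairs contribute trivially, so the indexing set matches the one in the statement.

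Next, by Theorem \ref{lem:simplicialalgebra}, the simplicial structure of $R[X]$ restricts on $R[X](a,b)$ to the free simplicial $R$-module on the simplicial set $ST(X)(a,b)$. Hence $NR[X](a,b)$ is the Moore normalization of this free simplicial $R$-module. By the classical equivalence for simplicial abelian groups / $R$-modules (see e.g.\ \cite{May}), the homology of the Moore-normalized complex coincides with the homology of the unnormalized chain complex generated by $ST(X)(a,b)$. I would invoke this Dold--Kan-style fact directly, since simplicial $R$-modules are in particular simplicial abelian groups.

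Finally, I appeal to Lemma \ref{lem:naturalhomology}: for any choice of 1-trace $p$ from $a$ to $b$, the homology of the unnormalized complex on $ST(X)(a,b)$ is by definition $HN_n(X)(p)$. Assembling, $HA_n(X)(a,b) \cong HN_n(X)(p)$ as $R$-modules, and summing over a chosen set of representative paths $p$, one per pair $(a,b)$ with a directed path between them, yields the claim. The main (and essentially only) technical ingredient is the normalized-vs-unnormalized equivalence; everything else is the bookkeeping already performed in Lemma \ref{lem:HARmod} and Theorem \ref{lem:simplicialalgebra}, so I do not anticipate a serious obstacle.
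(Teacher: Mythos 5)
Your proposal is correct and follows essentially the same route as the paper's proof: the decomposition from Lemma \ref{lem:HARmod}, the identification of $R[X](a,b)$ with the free simplicial $R$-module on $ST(X)(a,b)$ via Theorem \ref{lem:simplicialalgebra}, the classical normalized-versus-unnormalized equivalence, and finally Lemma \ref{lem:naturalhomology}. Your extra remark that the indexing set consists only of pairs $(a,b)$ admitting a directed path (the others contributing trivially) is a small clarification the paper leaves implicit, but the argument is otherwise identical.
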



\begin{proof}
By Lemma \ref{lem:HARmod}
$HA_i(X)$ has as underlying $R$-module, the coproduct of all 
$HA_i(X)(a,b))=Ker \ \partial^n_{\mid NR_n[X](a,b)} / Im \ \partial^{n+1}_{\mid NR_{n+1}[X](a,b)}$. 

But $Ker \ \partial^n_{\mid NR_n[X](a,b)} / Im \ \partial^{n+1}_{\mid NR_{n+1}[X](a,b)}$ is the homology of the Moore normalization of $R[X](a,b)$ within the abelian category of $R$-modules, which is equal to the free simplicial $R$-module on $ST(X)(a,b)$ by Theorem \ref{lem:simplicialalgebra}. 

It is well-known that the homology of its Moore normalization in the category of simplicial $R$-modules is equal to the homology of its unnormalized chain complex, which is equal to the homology of $ST(X)(a,b)$, equal to $HN_n(X)(p)$ for any $p$ 1-trace from $a$ to $b$ by Lemma \ref{lem:naturalhomology}. 
%
%
%
\end{proof}

The algebra operation for higher directed homology algebras $HA_n(X)$, $n\geq 2$, is, as we are going to see, degenerate. 

By Theorem 2.4.6 of \cite{VanderLinden} we know the homology in dimension greater or equal than 1 in a semi-abelian category is an abelian object in that category. 
By a standard Eckmann-Hilton argument: an abelian object in the category of {\em unital associative algebras} has a binary operation $m$ which agrees with the module addition and with the algebra multiplication, hence the three are equal. In particular 1 is equal to 0 and the algebra is the null algebra. But this result only holds because of the units. 

In the category of non-unital algebras as we are considering here, we can still infer by a Eckmann-Hilton argument, that $m$ is equal to the module addition. We can further write, for all $x, y$ in the abelian object in $Alg$ we are considering: 
$$\begin{array}{rcl}
0 & = & m(0,0) \\
& = & m(0\times y,x\times 0) \\
& = & m(0,x)\times m(y,0) \\
& = & x \times y
\end{array}$$

Hence we proved that: 


\begin{lemma}
For all $n \geq 2$, $HA_n(X)$ has as algebra operation the zero multiplication: for all $x$, $y$, $x\times y=0$. 
\end{lemma}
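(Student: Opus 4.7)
The plan is to invoke the semi-abelian framework directly and then carry out the Eckmann-Hilton computation already sketched immediately before the lemma. By Theorem 2.4.6 of \cite{VanderLinden}, the homology in strictly positive dimension of a simplicial object in a semi-abelian category is an abelian object of that category. With our shifted indexing $HA_n(X) = H_{n-1}(NR[X])$, the hypothesis $n\geq 2$ translates into $n-1 \geq 1$, so $HA_n(X)$ is an abelian object in $Alg$ for every $n\geq 2$.

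The next step is to unpack what ``abelian object in $Alg$'' means: it is an algebra $A$ equipped with an internal abelian-group operation $m: A\times A \to A$ which is a morphism of algebras (hence $R$-linear and commuting with $\times$), having $0$ as its unit. A standard Eckmann-Hilton argument applied to the two compatible binary operations $+$ and $m$ on $A$ (both morphisms of algebras $A\times A \to A$ sharing the same unit $0$) yields $m(x,y) = x+y$ for all $x,y \in A$.

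Finally, I exploit the remaining interchange, namely that $m$ is a morphism of algebras with respect to $\times$, so $m(a\times b,\, c\times d) = m(a,c)\times m(b,d)$. Choosing $a=d=0$, $b=y$, $c=x$ and using bilinearity of $\times$ (which gives $0\times y = x\times 0 = 0$), the computation already displayed in the excerpt reads
$$0 = m(0,0) = m(0\times y,\, x\times 0) = m(0,x)\times m(y,0) = x\times y,$$
where the last equality uses $m=+$ together with the neutrality of $0$ for $+$. Hence $\times$ vanishes identically on $HA_n(X)$ for $n\geq 2$.

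The only subtlety, rather than a real obstacle, is conceptual: one must not conflate the unital and non-unital cases. In unital associative algebras the same argument forces $1=0$ and collapses $A$ to the zero algebra, whereas in the non-unital setting of $Alg$ only the internal multiplication is forced to vanish, which is exactly the conclusion of the lemma and matches the footnote in the excerpt.
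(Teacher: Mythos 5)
Your proof is correct and follows essentially the same route as the paper: it invokes Theorem 2.4.6 of \cite{VanderLinden} to conclude that $HA_n(X)$ for $n\geq 2$ is an abelian object in $Alg$, applies Eckmann--Hilton to identify the internal operation $m$ with the module addition, and then uses the interchange $m(a\times b, c\times d)=m(a,c)\times m(b,d)$ with well-chosen zero entries to force $x\times y=0$. The only addition over the paper's argument is the explicit bookkeeping of the index shift ($HA_n = H_{n-1}(NR[X])$, so $n\geq 2$ gives homological degree $\geq 1$), which is a welcome clarification but not a different method.
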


So higher homology algebras should only be considered to be mere $R$-modules. 

\begin{remark}
There is no interesting idempotent in $HA_n(X)$, since the multiplication is equal to zero. But the semi-augmentation map $\partial_0$ serves as a bigrading of the module $HA_n[X]$. Indeed, 
$$HA_n(X)=\bigoplus\limits_{(a,b)\in R_0[X]} HA_n(X)({a,b})$$
\end{remark}

\paragraph{Relationship with natural systems with composition pairing}

In \cite{cameron}, the authors showed that the natural homology of \cite{Dubut} has the drawback of giving isomorphic natural systems for the directed space $X$ and the directed space $X^{op}$ with opposite directed structure, and proposed a way to break this isomorphism, by considering natural homology as a natural system plus an operation called "composition pairing", first introduced by T. Porter \cite{Porter2}. 



Let $\V$ be a category with finite products. Given a natural system $D$ on a category $\Cr$ with values in $\V$, recall from~\cite{Porter2} that a {composition pairing} associated to $D$ consists of two families of morphisms of~$\V$ 
\[
\big(\; \nu_{f,g} : D_f\times D_g \fl D_{fg}\;\big)_{f,g\in \Cr_1} \qquad \text{ and } \qquad \big(\; \nu_x : T \fl D_{1_x}\;\big)_{x\in \Cr_0}
\]
where $T$ is the terminal object in $\V$, the indexing $1$-cells $f$ and $g$ are composable, and such that the three following coherence conditions are satisfied:
\begin{enumerate} 
\item Naturality condition:  the diagram
\[
\xymatrix @C=4em @R=2.25em{
D_{f}\times\ D_{g}
  \ar[r] ^-{\nu_{f,g}}
  \ar[d] _-{D(u,1)\times D(1,v)}
& 
D_{fg} 
  \ar[d] ^-{D(u,v)}
\\
D_{uf}\times D_{gv} 
  \ar[r] _-{\nu_{uf,gv}}
&  
D_{ufgv}
}
\]
commutes in $\V$ for all $1$-cells $f,g,u,v$ in $\Cr_1$ such that the composites are defined.
\item The cocycle condition: the diagram 
\[
\xymatrix@C=6em@R=2.25em{
D_{f}\times D_{g}\times D_{h}
	\ar[r] ^-{\nu_{f,g}\times id_{D_{h}}}
	\ar[d] _-{id_{D_{f}}\times \nu_{g,h}}
&  
D_{fg}\times D_{h} 
	\ar[d] ^-{\nu_{fg,h}}
\\
D_{f}\times D_{gh} 
	\ar[r] _-{\nu_{f,gh}}
& 
D_{fgh}
}
\]
commutes for all $1$-cells $f,g$ and $h$ of $\Cr$ such that the composite $fgh$ is defined,
\item The unit conditions: the diagrams
\[
\xymatrix@C=4em@R=2.25em{
D_{f}  
& 
D_{f} \times D_{1_{y}} 
	\ar[l] _-{\nu_{f,1_{y}}}
\\
&   
D_{f}\times T
	\ar[u] _-{1_{D_{f}}\times\nu_{y}}
	\ar[ul] ^-{\cong}
}
\hskip1cm
\xymatrix@C=4em@R=2.5em{
D_{1_{x}}\times D_{f} 
	\ar[r] ^-{\nu_{1_{x},f}} 
& 
D_{f}
\\
T\times D_{f}   
	\ar[u] ^-{\nu_{x}\times 1_{D_{f}}}
	\ar[ur] _-{\cong}
& 
}    
\]
commute for every $1$-cell $\map{f}{x}{y}$ of $\Cr$.
\end{enumerate}

The category of natural systems on $\Cr$ with values in $\V$ which admit a composition pairing is the category whose objects are pairs $(D,\nu)$, with $D$ a natural system on $\Cr$ and $\nu$ a composition pairing associated to $D$. The morphisms are natural transformations $\alpha : D \fl D'$ {compatible with the composition pairings} $\nu$ and $\nu'$.

Now, 
let $(D,\nu)$ be a natural system with values in $R$-modules, on a small category $\cal C$, and $\nu$ is a composition pairing on $D$, then we claim that we can 
associate to $(D,\nu)$ the $R$-algebra $R[D,\nu]$: 
\begin{itemize}
    \item whose underlying module is the coproduct 
    $$
    \coprod\limits_{(a,b) \in Obj(\C), \ f \in \C(a,b)} D_f
    $$
    \item the external multiplication $a\times b$ for $a$, $b \in R[D,\nu]$ is defined on each component of the coproduct as: 
    $$
    a \times b =\left\{\begin{array}{ll}
    \nu_{f,g}(a,b) & \mbox{if $f$ and $g$ are composable and $a\in D_f$, $b \in D_g$} \\
    0 & \mbox{otherwise}
    \end{array}\right.
    $$
    \noindent and then, extended by bilinearity. 
\end{itemize}

This is immediate: the cocycle condition shows the associativity of $a\times b$. Distributivity over addition is ensured by naturality of $\nu$.



Conversely, we have the following, akin to the classical relationship \cite{assocalg} between $R$-algebras and representations of left or right modules over graphs. In what follows, we restrict to the case where $R$ is a field. 

Let $A$ be a basic and connected finite dimensional $R$-algebra and ${e_1,e_2,\ldots,e_n}$ be a complete set of primitive orthogonal idempotents of $A$. The (ordinary) quiver of $A$ \cite{assocalg}, denoted by $Q_A$, is defined as follows:
\begin{itemize}
\item The points of $Q_A$ are the numbers $1, 2,\ldots , n$, which are in bijective correspondence with the idempotents $e_1, e_2,\ldots , e_n$.
\item Given two points $a,b \in (Q_A)_0$, the arrows $\alpha : \ a \rightarrow b$ are in bijective correspondence with the vectors in a basis of the $R$-vector space $e_a(rad \ A/rad^2 \ A)e_b$, where $rad \ A$ is the Jacobson ideal of the algebra $A$ \cite{assocalg}. 
\end{itemize}

Construct now a natural system with composition pairing from the basic connected finite dimensional $R$-algebra $A$ as follows: 
\begin{itemize}
    \item Consider $\C$ the free category generated by quiver $Q_A$ defined above
        \item Construct a natural system as follows. We define a functor $F: {\cal F C}\rightarrow R-mod$ with: 
    \begin{itemize}
        \item Consider $f$ an object of ${\cal FC}$, 
        it corresponds to a directed path $(p_1,\ldots,$ $p_l)$ from some $a$ to some $b$ in $Q_A$, hence to an element of $e_a A e_b$. We set $$F(f)=e_a A e_b$$
        \item Consider $\langle u ,v \rangle$ a morphism from $f$ to $g$ in $\cal FC$. $f$ corresponds to a path from $a$ to $b$ in $Q_A$, $g$ to a path from $a'$ to $b'$, $u$ to a path from $a'$ to $a$ and finally $v$ to a path from $b$ to $b'$. We set 
        $$F(\langle u,v \rangle(x)=u\times x \times v
        $$
        \noindent $u$ seen as an element of $e_{a'} A e_a$, $v$ as an element of $e_b A e_{b'}$, for any $x\in e_a A e_b$. 
    \end{itemize}
    \item Construct a natural transformation $\nu: \ F_o \rightarrow {}_o F$ as follows: 
    $$
    \nu(u,v)=u \times v
    $$
    \noindent where $f$ is a path from $a$ to $b$ in $Q_A$, $g$ is a path from $b$ to $c$ in $Q_A$, $u \in e_a A e_b$, $v \in e_b A e_c$. 
\end{itemize}



The transformations from $R$-algebras to natural systems with composition pairing and vice-versa are inverse of one another. This would indicate a strong relation of the first natural homology natural system with composition pairing with our construction $HA_1$ (although we applied it only to directed spaces that give infinite-dimensional algebras), but shows the discrepancy between higher natural homology natural systems with composition pairing introduced in \cite{cameron} with $HA_n$, $n\geq 2$ that we constructed here. We believe that moving from $R$-algebras to $R$-algebroids for constructing homology theories for directed spaces should be the counterpart to the construction of \cite{cameron}. This will be done in another venue: algebroids cannot be treated in the semi-abelian framework contrarily to algebras as we developed. Hopefully, much is known about algebroids in the context of classical algebraic topology already \cite{Mosa,mitchell1970rings,mitchell1985separable,tbtkmath575197}. 






\section{Examples}

\label{sec:examples}

We give below simple illustrative examples. We consider d-spaces $\mid C \mid$ generated by finite precubical sets $C$, as described in Section \ref{sec:dspace}. We will suppose in what follows that $C$ is in fact a geometric precubical set, see e.g. \cite{Fajstrup2005DipathsAD}. We will describe the full algebra $HA_1(\mid C\mid )$, which is infinite dimensional in general, but also interesting finite dimensional subalgebras $\widetilde{HA}_1(\mid C\mid )$ (that we call, by definition, the first homology algebra of the precubical set $C$, $HA_1(C)$), which is extracted using the bigrading $\partial_0$. We only retain in $\widetilde{HA}_1(\mid C\mid )$ the elements of $HA_1(C)$ that have as bigradings pair of vertices in $C$: 
$$\widetilde{HA}_1(C)=\{ x \in HA(\mid C\mid ) \ \mid  \ \partial_0(x) \in (C_0)^2\} 
$$

We note that, by 
Theorem 4.1 of \cite{Fajstrup2005DipathsAD}, all dipaths of $\mid C\mid $ are dihomotopic, hence homologous in our theory, to cubical dipaths in $C$, i.e. dipaths which go through the geometric realization $\mid C_{\leq 1}\mid $ of the 1-skeleton $C_{\leq 1}$ of $C$ within $\mid C\mid $. This means that for describing $\widetilde{HA}_1(C)$ it is enough to describe first the path algebra of $C$. 

Let us define $R_2[C]$ to be the 2-dipaths algebra, whose generators are sequences of 2-cells $(A_1,\ldots,A_k)$ for some $k\geq 0$, $A_i \in C_2$, $i=1,\ldots,k$, and $d^1_0 d^1_0(A_i)=d^0_0d^0_0(A_{i+1})$ for $i=1,\ldots,k-1$, and whose (external) multiplication is: 
$$
(A_1,\ldots,A_k)*(B_1,\ldots,B_l)=\left\{\begin{array}{ll}
(A_1,\ldots,A_k,B_1,\ldots,B_l) & \mbox{if $d^1_0 d^1_0(A_k)=d^0_0d^0_0(B_{1})$} \\
0 & \mbox{otherwise}
\end{array}\right.
$$

By 
Theorem 5.2 of \cite{Fajstrup2005DipathsAD}, two dipaths in $\mid C_{\leq 1}\mid $ are dihomotopic if and only if they are dihomotopic within the geometric realization $\mid C_{\leq 2}\mid $ of the 2-skeleton $C_{\leq 2}$ of $C$, within $\mid C\mid $. 
It is easily seen now by
Lemma \ref{lem:HA1} that $\widetilde{HA}_1(C)$ is the quotient: 
$$
\widetilde{HA}_1(C)= R_1[C]/\sim 
$$ 
where $p$ and $q \in R_1[C]$ are such that $p \sim q$ if and only if there exists $z\in R_2[C]$ with: 
$$p=q+(\delta_0-\delta_1)(z)$$

\begin{example}[Directed $S^1$: $dS^1$]
\label{ex:dirS12}
We consider the directed realization $\mid dS^1\mid $ of the precubical set (a graph here) $dS^1$ of Example \ref{ex:dirS1}. 

Let $x$ and $y$ be two distinct points on $\mid dS^1\mid $. The generators of $HA_1(\mid dS^1\mid )$ are easily seen to be the unique class of directed paths from $x$ to $y$ modulo increasing reparameterization, that we denote by $[x,y]$, and the unique class of directed paths from $y$ to $x$ modulo reparameterization that we denote by $[y,x]$. We denote by $s_x$ a representative dipath starting (and finishing) at $x$ a point of $\mid dS^1\mid $ of the unique generator of $\pi_1(\mid dS^1\mid )$ (we take this generator to agree with the directed structure on $\mid dS^1\mid $). 

The constant paths, idempotents of the algebra $HA_1(\mid dS^1\mid )$ are $[x,x]$. The algebra multiplication is defined as follows: 
$$\begin{array}{lcl}
{[}x,x]*[y,z] & = & \left\{\begin{array}{ll}
0 & \mbox{if $x \neq y$} \\
{[}y,z] & \mbox{otherwise}
\end{array}\right.\\
{[}y,z]*[x,x] & = & \left\{\begin{array}{ll}
0 & \mbox{if $x \neq z$} \\
{[}y,z] & \mbox{otherwise}
\end{array}\right.\\
{[}x,y]*[y,z] & = & \left\{\begin{array}{ll}
s_x * [x,z] & \mbox{if $z \in [x,y]$} \\
{[}x,z] & \mbox{otherwise}
\end{array}\right.
\end{array}$$
It is easy to see that 
$$\widetilde{HA}_1(dS^1)=R[s_u]$$
\noindent the algebra of univariate polynomials. This is because the only generator left to consider is $s_u$, where $u$ is the unique 0-cell of precubical set $S^1$, and because of the algebra laws on generators given above. 
\end{example}

\begin{example}[Filled-in and empty squares]
\label{ex:filledinsquare}
We consider now the following cubical complex $X$: 
\[\begin{tikzcd}
  4 \arrow[r,"a"] \arrow[d,"b"] \arrow[dr,phantom,"C"]
    & 2 \arrow[d,"c"] \\
  3 \arrow[r,"d"]
& 1 \end{tikzcd}
\]
\noindent with a two cell $C$ filling in the corresponding hole. 

The geometric realization of $X$ is dihomeomorphic to $I^2$. 
The algebra $HA_1(\mid X\mid )$ is easily seen to be generated by elements $[x,y]$ with $x$ and $y$ elements of $I^2$, such that $x\leq y$ in the componentwise ordering. The composition is generated by:
$$[x,y]\times [y,z]=[x,z]$$

Now, consider $Y$ to be the 1-skeleton of $X$. We see $\mid Y\mid $ as the boundary of the unit square $I^2$. The generators of $HA_1(\mid Y\mid )$ are now $[x,y]$ as before, when $x\leq y$ as before, except when $x$ is vertex $4$ and $y$ is vertex $1$, in which case we have two generators $[4,1]_3$ corresponding to the class of paths from 4 to 1 going through 3 and $[4,1]_2$ corresponding to the class of paths from 4 to 1 going through 2. The composition is inherited by the one on $HA_1(\mid X\mid )$. 

Now we are looking at $\widetilde{HA}_1(X)$ and $\widetilde{HA}_1(Y). 
$
We had the following path algebra (see Example \ref{ex:emptysquare}) for the empty square, i.e. for $R_1[X]$: 
$$
\begin{pmatrix}
R & 0 & 0 & 0 \\
R & R & 0 & 0 \\
R & 0 & R & 0 \\
R^2 & R & R & R 
\end{pmatrix}
$$
For the empty square $Y$, there is nothing to quotient from there, and $\widetilde{HA}_1(Y)=R_1[X]$. For the filled-in square $X$, 
we quotient it by the (admissible) sub-$R_1[X]$-module generated by $ac-bd$, which is $Im \ \partial$ for $\partial: \ R_2[X] \rightarrow R_1[X]$. $Im \ \partial$ can be seen also as the two-sided ideal of the algebra $R_1[X]$ generated by $ac-bd$. The resulting quotient of $R_1[X]$-module can then be seen as a $R$-algebra, which is the following matrix algebra: 
$$
\begin{pmatrix}
R & 0 & 0 & 0 \\
R & R & 0 & 0 \\
R & 0 & R & 0 \\
R & R & R & R 
\end{pmatrix}
$$

\end{example}



\begin{example}[Two holes on the antidiagonal of a square]
\label{ex:twoholes}
We consider the following two precubical sets and describe $\widetilde{HA}_1$ of both: 
\begin{center}
    \begin{minipage}{6cm}
    \[\begin{tikzcd}
  9 \arrow[r,"i"] \arrow[d,"k"] \arrow[dr,phantom,"C"] & 8 \arrow[d,"h"] \arrow[r,"j"] & 7 \arrow[d,"l"]\\
  6 \arrow[r,"d"] \arrow[d,"c"] & 5 \arrow[d,"e"] \arrow[dr,phantom,"D"] \arrow[r,"f"] & 4 \arrow[d,"g"] \\
  3 \arrow[r,"a"] & 2 \arrow[r,"b"] & 1
\end{tikzcd}
\]
\end{minipage}
    \begin{minipage}{6cm}
    \[\begin{tikzcd}
  9 \arrow[r,"i"] \arrow[d,"k"]  & 8 \arrow[d,"h"] \arrow[r,"j"] \arrow[dr,phantom,"E"] & 7 \arrow[d,"l"]\\
  6 \arrow[r,"d"] \arrow[d,"c"] \arrow[dr,phantom,"F"] & 5 \arrow[d,"e"]  \arrow[r,"f"] & 4 \arrow[d,"g"] \\
  3 \arrow[r,"a"] & 2 \arrow[r,"b"] & 1
\end{tikzcd}
\]
\end{minipage}
\end{center}
For both complexes, we have the following path algebra $R_1[C]$: 
$$
\begin{pmatrix}
R & 0 & 0 & 0 & 0 & 0 & 0 & 0 & 0 \\
R & R & 0 & 0 & 0 & 0 & 0 & 0 & 0 \\
R & R & R & 0 & 0 & 0 & 0 & 0 & 0 \\
R & 0 & 0 & R & 0 & 0 & 0 & 0 & 0 \\
R^2 & R & 0 & R & R & 0 & 0 & 0 & 0 \\
R^3 & R^2 & R & R & R & R & 0 & 0 & 0 \\
R & 0 & 0 & R & 0 & 0 & R & 0 & 0 \\
R^3 & R & 0 & R^2 & R & 0 & R & R & 0 \\
R^6 & R^3 & R & R^3 & R^2 & R & R & R & R 
\end{pmatrix}
$$
For the left cubical complex, we quotient it by $Im \ \partial$ which is the sub-$R_1[X]$-module generated by $ih-kd$ and $gf-be$. For the right cubical complex, we quotient it by $Im \ \partial$ which is the module generated by $jl-hf$ and $de-ca$, giving respectively the matrix algebras, by the same argument as above: 
$$
\begin{pmatrix}
R & 0 & 0 & 0 & 0 & 0 & 0 & 0 & 0 \\
R & R & 0 & 0 & 0 & 0 & 0 & 0 & 0 \\
R & R & R & 0 & 0 & 0 & 0 & 0 & 0 \\
R & 0 & 0 & R & 0 & 0 & 0 & 0 & 0 \\
R & R & 0 & R & R & 0 & 0 & 0 & 0 \\
R^3 & R^2 & R & R & R & R & 0 & 0 & 0 \\
R & 0 & 0 & R & 0 & 0 & R & 0 & 0 \\
R^3 & R & 0 & R^2 & R & 0 & R & R & 0 \\
R^6 & R^3 & R & R^3 & R & R & R & R & R 
\end{pmatrix}
$$
\noindent and
$$
\begin{pmatrix}
R & 0 & 0 & 0 & 0 & 0 & 0 & 0 & 0 \\
R & R & 0 & 0 & 0 & 0 & 0 & 0 & 0 \\
R & R & R & 0 & 0 & 0 & 0 & 0 & 0 \\
R & 0 & 0 & R & 0 & 0 & 0 & 0 & 0 \\
R^2 & R & 0 & R & R & 0 & 0 & 0 & 0 \\
R^3 & R & R & R & R & R & 0 & 0 & 0 \\
R & 0 & 0 & R & 0 & 0 & R & 0 & 0 \\
R^3 & R & 0 & R & R & 0 & R & R & 0 \\
R^6 & R^3 & R & R^3 & R^2 & R & R & R & R 
\end{pmatrix}
$$
which are indeed non-isomorphic algebras. 
\end{example}

\begin{example}[Empty cube]
\label{ex:emptycube}
In the following example, we consider the cubical complex whose underlying quiver is: 
\[\begin{tikzcd} 
    &  4\arrow{rr} \arrow{dl} & &   3  \arrow{dl} \\
    2 \arrow[crossing over]{rr} & & 1 \\
      & 8 \arrow{rr} \arrow{uu} \arrow{dl} & &  7  \arrow{dl} \arrow[uu] \\
    6 \arrow{rr}\arrow{uu} && 5 \arrow[uu,crossing over]
 \end{tikzcd}\]
 \noindent and whose 6 faces $(8,6,5,7)$, $(6, 5, 2, 1)$ etc. are filled in by 2-cells. 
 
 By Lemma \ref{lem:charactHA}, $HA_2(X)$ is the module $HA_2(X)(8,1)$ (the other ones are equal to 0), which has just one generator. 
\end{example}



\section{Exact sequences of directed homology algebras}

\label{sec:exactseq}

As directed homology is defined in terms of a simplicial object in a semi-abelian category, we know \cite{VanderLinden} that any short exact sequence in the category of simplicial objects of algebras: 

\[
\begin{tikzcd}
    0\arrow{r} & A\arrow{r}{f} & B\arrow{r}{g} & C\arrow{r} & 0 \\
\end{tikzcd}
\]
\noindent gives rise to a corresponding long exact sequence between homology algebras: 

 \tikzset{
  curarrow/.style={
  rounded corners=8pt,
  execute at begin to={every node/.style={fill=red}},
    to path={-- ([xshift=-50pt]\tikztostart.center)
    |- (#1) node[fill=white] {$\scriptstyle d_*$}
    -| ([xshift=50pt]\tikztotarget.center)
    -- (\tikztotarget)}
    }
}

\begin{center}
    \begin{tikzcd}[arrow style=math font,cells={nodes={text height=2ex,text depth=0.75ex}}]
       \cdots & H_{q-1}(B) \arrow[l] \arrow[draw=none]{d}[name=Y, shape=coordinate]{} & \arrow[l] H_{q-1}(A) \\
       H_{q}(C) \arrow[curarrow=Y]{urr}{} & H_{q}(B) \arrow[l] \arrow[draw=none]{d}[name=Z,shape=coordinate]{} & H_{q}(A) \arrow[l] \\
       H_{q+1}(C) \arrow[curarrow=Z]{urr}{} & H_{q+1}(B) \arrow[l] & \cdots \arrow[l]
   \end{tikzcd}
\end{center}

A natural question is whether we have a Mayer-Vietoris long exact sequence in our context. 

Consider a directed space, union of directed spaces $X_1$ and $X_2$. Consider the coproduct $R_i[X_1]\oplus R_i[X_2]$ of the algebras $R_i[X_1]$ and $R_i[X_2]$ of $i$-traces. Its elements are linear combinations of elements of the form: 
$$x_1 \otimes x_2 \otimes \ldots \otimes x_n$$
\noindent where $x_i \in X_{\epsilon_i}$, $\epsilon_i \in \{1,2\}$, see Section \ref{sec:catalg}. As we have injective maps of directed spaces from $X_1$ to $X$ and from $X_2$ to $X$, by Remark \ref{rem:functoriality}, we have algebra maps from all $R_i[X_j]$ to $R_i[X]$. By the universal property of coproducts, this implies that there is a unique algebra map 
$$h: \ R_i[X_1]\oplus R_i[X_2] \rightarrow R_i[X]$$
\noindent which is easily seen to be such that:
$$h(x_1 \otimes x_2 \otimes \ldots \otimes x_n)=x_1\times x_2\times \ldots x_n$$
\noindent where the algebra multiplication is the one in $X$. 

It is immediate to see that, as the boundary and degeneracy operators of Theorem \ref{lem:simplicialalgebra} are algebra maps, $R[X_1]$ and $R[X_2]$ admit a coproduct as simplicial algebras, with $(R[X_1]\oplus R[X_2])_i=R_i[X_1]\oplus R_i[X_2]$ for all $i \in \N$, and: 
$$\begin{array}{lcl}
d_i(x_1 \otimes x_2 \otimes \ldots \otimes x_n) & = & d_i(x_1)\otimes d_i(x_2)\otimes \ldots \otimes d_i(x_n) \\
\sigma_i(x_1 \otimes x_2 \otimes \ldots \otimes x_n) & = & \sigma_i(x_1)\otimes \sigma_i(x_2)\otimes \ldots \otimes \sigma_i(x_n) 
\end{array}$$
\noindent and map $h$ above is a map of simplicial algebras. 

\paragraph{Disjoint unions of directed spaces}
When $X_1$ and $X_2$ have empty intersection, i.e. when $X$ is the disjoint union of $X_1$ with $X_2$, $h$ is the zero map on all tensors $R_i[X_{\epsilon_1}]\otimes \ldots \otimes R_i[X_{\epsilon_n}]$, with $i\geq 0$ and $n\geq 2$, $\epsilon_j \neq \epsilon_{j+1}$ for all $j=1,\ldots, n-1$. Function $h$ is only non-zero on 
elements $x_1 \in R_i[X_1]$ (respectively $x_2 \in R_i[X_2]$) where it is the identity map. Function $h$ is therefore surjective from $R_i[X_1]\oplus R_i[X_2]$ to $R_i[X_1 \cup X_2]$. Thus, we have the following short exact sequence of algebras, which is actually a short sequence of simplicial algebras, since kernels of algebras defined in Section \ref{sec:catalg} induce kernel of simplicial algebras:
\[
\begin{tikzcd}
    0\arrow{r} & Ker \ h \arrow{r}{ker \ h} & R[X_1]\oplus R[X_2] \arrow{r}{h} & R[X] \arrow{r} & 0 \\
\end{tikzcd}
\]
\noindent Therefore we get the following long exact sequence of homology algebras: 

\begin{center}
    \begin{tikzcd}[arrow style=math font,cells={nodes={text height=2ex,text depth=0.75ex}}]
       \cdots & H_{q-1}(R[X_1]\oplus R[X_2]) \arrow[l] \arrow[draw=none]{d}[name=Y, shape=coordinate]{} & \arrow[l] HA_{q-1}(Ker \ h) \\
       HA_{q}(X) \arrow[curarrow=Y]{urr}{} & H_{q}(R[X_1]\oplus R[X_2]) \arrow[l] \arrow[draw=none]{d}[name=Z,shape=coordinate]{} & HA_{q}(Ker \ h) \arrow[l] \\
       HA_{q+1}(X) \arrow[curarrow=Z]{urr}{} & H_{q+1}(R[X_1]\oplus R[X_2]) \arrow[l] & \cdots \arrow[l]
   \end{tikzcd}
\end{center}

Now, it is straightforward to see that $H_i(R[X_1]\oplus R[X_2])=HA_i[X_1]\oplus HA_i[X_2]$. This goes as in the abelian case: the Moore normalization of $R[X_1] \oplus R[X_2]$ is generated by $x_1 \otimes \ldots \otimes x_n$ where $x_i$ is in the Moore normalization of $R[X_{\epsilon_i}]$, for all $i=1,\ldots,n$. This is because the boundary operators $d_i$ act on $x_1\otimes \ldots \otimes x_n$ as $d_i(x_1)\otimes \ldots \otimes d_i(x_n)$. For the same reason, the image of $\partial$ is generated by $y_1\otimes \ldots \otimes y_n$ where $y_i$ is in the image of $\partial$ in $R[X_{\epsilon_i}]$. Hence we have the following Mayer-Vietoris like long exact sequence: 


 \begin{center}
    \begin{tikzcd}[arrow style=math font,cells={nodes={text height=2ex,text depth=0.75ex}}]
       \cdots & HA_{q-1}(X_1)\oplus HA_{q-1}(X_2) \arrow[l] \arrow[draw=none]{d}[name=Y, shape=coordinate]{} & \arrow[l] HA_{q-1}(Ker \ h) \\
       HA_{q}(X) \arrow[curarrow=Y]{urr}{} & HA_{q}(X_1)\oplus HA_q(X_2) \arrow[l] \arrow[draw=none]{d}[name=Z,shape=coordinate]{} & HA_{q}(Ker \ h) \arrow[l] \\
       HA_{q+1}(X) \arrow[curarrow=Z]{urr}{} & HA_{q+1}(X_1)\oplus HA_q(X_2) \arrow[l] & \cdots \arrow[l]
   \end{tikzcd}
\end{center}
\noindent with $q\geq 1$ (the rest being the 0 algebra). 

\paragraph{Towards a Mayer-Vietoris like long-exact sequence}

\label{sec:mayer}
When $X$ is not the disjoint union of $X_1$ and $X_2$, i.e. $X_1 \cap X_2\neq \emptyset$, 
we have the short exact sequence of simplicial algebras: 
\[
\begin{tikzcd}
    0\arrow{r} & Ker \ h \arrow{r}{ker \ h} & R[X_1]\oplus R[X_2] \arrow{r}{h} & R[X] \\
\end{tikzcd}
\]
\noindent but there is not reason a priori that map $h$ is surjective. It is though, when restricting $h$ to a map from  $R_1[X_1]\oplus R_1[X_2]$ to $R_1[X]$. 

The proof goes as for the classical van Kampen theorem on groupoids \cite{vankampen}, or for fundamental categories \cite{goubaultvankampen,grandisbook}. Let $p$ be a 1-trace of $R_1[X]$. There is a Lebesgue covering of $[0,1]$ such that $p$ is the concatenation of finitely many of its subpaths $p_1,\ldots,p_k$, with $p_i \in X_{\epsilon_i}$, $i=1,\ldots, k$, $\epsilon_i=1$ or 2. Then $h(p_1\otimes \ldots \otimes p_k)=p$, and $p$ is in the image of $h$. 

For higher traces in $X$, this is not true in general: for instance if $X_1$ (with edges $a_1$, $b_1$, $c_1$, $d_1$ and 2-cell $C_1$) and $X_2$ (with edges $a_2$, $b_2$, $c_2$, $d_2$ and 2-cell $C_2$) are two copies of the unit square of the Example \ref{ex:filledinsquare}, glued together along $c_1$ and $d_2$, the 2-trace which is the equivalence class under reparametrization of any directed homeomorphism from $\I \times I$ to $X$ is not the concatenation, in $R_2[X]$ of any 2-trace of $X_1$ with some 2-trace of $X_2$. 

There are still simple cases in which this holds and we have a long-exact sequence as for the disjoint union above. For instance all $i$-traces in the space of Example \ref{ex:twoholes} are concatenations of $i$-traces of the space geometric realization of the sub-precubical sets made up of $2, 3, 5, 6, 8, 9, a, c, e, d, k, h, i, C$, with $i$-traces of the space geometric realization of the sub-precubical sets made up of $2, 1, 5, 4, 8, 7, b, e, g, h, f, l, j, D$. 
Indeed, the union of directed spaces with a discrete $X_1\cap X_2$ is a particular case where the long-exact sequence above holds.  
We give below a simple example: 

\begin{example}
Consider again the directed circle of Example \ref{ex:dirS1} and \ref{ex:dirS12}. We consider the directed circle to be the union of two half-circles $u$ and $v$: 
\[
\begin{tikzcd}
1 \arrow[r, out=45,in=135, "u"] & 2 \arrow[l, out=-135, in=-45, "v"]
\end{tikzcd}
\]

Using the same notations as in Example \ref{ex:dirS12}, we note by $[x,x']$ the unique 1-trace from $x$ to $x'$ on $u$ when $x \leq x'$ in the corresponding directed structure, and $[y,y']$ the unique 1-trace from $y$ to $y'$ on $v$, when $y \leq y'$. The algebra operations in $R_1[u]$ (and hence $HA_1[u]$) is similar to the one of Example \ref{ex:dirS12}: 
$$[x_1,x'_1]\times [x_2,x'_2] = \left\{\begin{array}{ll}
[x_1,x'_2] & \mbox{if $x'_1=x_2$} \\
0 & \mbox{otherwise}
\end{array}\right.
$$
\noindent and similarly for $v$. Thus $\widetilde{HA}_1[u]$ is the upper triangular matrix algebra: 
$$\widetilde{HA}_1[u]=\left(\begin{array}{cc}
R & R \\
0 & R
\end{array}\right)
$$
\noindent and $\widetilde{HA}_1[v]$ is the lower triangular matrix algebra (still indexing the lines and columns by vertices 1 and 2): 
$$\widetilde{HA}_1[v]=\left(\begin{array}{cc}
R & 0 \\
R & R
\end{array}\right)
$$
Now, $Ker \ h$ and $HA_1(Ker \ h)$ are easily seen to be the ideal of $R_1[u]\oplus R_2[v]$ generated by: 
$$\begin{array}{rcll}
[x,x']\otimes[y,y'] & = & 0 & \mbox{if $x'\neq 2$ or $y\neq 2$} \\ 
{[}y,y']\otimes[x,x'] & = & 0 & \mbox{if $y'\neq 1$ or $x\neq 1$} \\
\end{array}$$
\end{example}

Still, we believe that the long exact sequence above still hold in more general cases. The idea is that for some class of precubical sets, the homotopy type of tame paths in the sense of \cite{Ziemanski2} is the same as the homotopy type of all paths, and in some ways, $h$ is essentially surjective. 

\section{Conclusion and future work}


We provided in this paper a semi-abelian approach to directed homology. 
Many theoretical questions arise from this work. Can we develop a Kunneth-like theorem and long-exact sequence in relative homology (based on e.g. \cite{Goedecke}), can we develop a Mayer-Vietoris like long exact sequence in general from the discussion of Section \ref{sec:mayer}, using e.g. the methods from \cite{everaert}? What is the relationship between the Quillen-Andr\'e homologies \cite{Quillenrings} of the algebras $R_i[X]$, $i\geq 1$, for some directed space $X$, with the homology algebras we introduced here, if any? 

We touched upon the characterization of directed homology algebras for precubical sets. If fully developed with suitable relationships with the directed homology algebras of there directed geometric realizations \cite{thebook}, our theory is amenable to practical computations, using such mathematical software as GAP \cite{GAP4}. This is an exciting prospect, since most directed homology theories, such as natural homology, are not amenable to tractable computations. 

Finally, we hope to make clear the relationship between persistence homology and our approach in a subsequent paper. Indeed, one can consider a form of persistence homology on directed spaces $X$ by looking at some particular modules over the algebra $R_1[X]$.

\section{Declarations}

\paragraph{Funding and/or Conflicts of interests/Competing interests} 
The author has no competing interests to declare that are relevant to the content of this article.

\end{document}